\documentclass[11pt,letterpaper]{amsart}

\usepackage{amsmath,amscd}

\usepackage{setspace}
\usepackage{multicol}
\usepackage[lite]{amsrefs} 
\usepackage{amssymb} 
\usepackage{graphicx} 
\usepackage{euscript}
\usepackage{color}
\usepackage{array}
\usepackage{picture}
\usepackage{epic}
\usepackage{tikz}
 \usetikzlibrary{backgrounds,cd}
\usepackage{comment}
\usepackage{enumerate}
\usepackage{url}
\usepackage[colorlinks=true, linkcolor=black, citecolor=black, urlcolor=black]{hyperref}

\usepackage{bookmark}
\usepackage[margin=1.4in]{geometry}
\usepackage{caption}
\usepackage{MnSymbol}
\usepackage{enumitem}
\usepackage{fancyhdr} 
\usepackage{mathtools}

\usepackage{braket}

\usepackage{xcolor}
\usepackage{rotating}
\usepackage[all]{xy}
\usepackage{comment}
\usepackage{pdflscape}
\usepackage{tikz}
\usepackage{tikz-cd}
\usepackage{soul,color}
\usepackage[colorinlistoftodos]{todonotes}

\usepackage{tabmacD}

\newcommand*{\fullref}[1]{\hyperref[{#1}]{\ref*{#1}. \nameref*{#1}}}

\tikzset{
  symbol/.style={
    draw=none,
    every to/.append style={
      edge node={node [sloped, allow upside down, auto=false]{$#1$}}
    }
  }
}

\usepackage{amsfonts}
\usepackage{amssymb}
\usepackage{extarrows}
\usepackage{cleveref}
\usepackage{amsthm}
\usepackage{amscd}
\usepackage{amsmath}
\usepackage{mathtools}
\usepackage{mathrsfs}

\usepackage{color}	
\definecolor{due}{RGB}{0,76,147}

\usepackage{graphicx}	
\usepackage{multicol}	
\usepackage{wrapfig}
\usepackage{enumitem}

\usepackage{longtable}

\theoremstyle{definition}
\newtheorem{defi}{Definition}[section]
\theoremstyle{plain}
\newtheorem{thm}[defi]{Theorem}

\newtheorem{introthm}{Theorem}[section]

\newtheorem{cor}[defi]{Corollary}
\newtheorem{lemma}[defi]{Lemma}
\theoremstyle{remark}

\newtheorem{rmk}[defi]{Remark}

\theoremstyle{definition}

\newtheorem*{ack}{Acknowledgement}

  \makeatletter
\newcommand{\xdashrightarrow}[2][]{\ext@arrow 0359\rightarrowfill@@{#1}{#2}}
\newcommand{\xdashleftarrow}[2][]{\ext@arrow 3095\leftarrowfill@@{#1}{#2}}
\newcommand{\xdashleftrightarrow}[2][]{\ext@arrow 3359\leftrightarrowfill@@{#1}{#2}}
\def\rightarrowfill@@{\arrowfill@@\relax\relbar\rightarrow}
\def\leftarrowfill@@{\arrowfill@@\leftarrow\relbar\relax}
\def\leftrightarrowfill@@{\arrowfill@@\leftarrow\relbar\rightarrow}
\def\arrowfill@@#1#2#3#4{%
  $\m@th\thickmuskip0mu\medmuskip\thickmuskip\thinmuskip\thickmuskip
   \relax#4#1
   \xleaders\hbox{$#4#2$}\hfill
   #3$%
}
\makeatother

\numberwithin{equation}{section}

\makeatletter
\@namedef{subjclassname@2020}{%
  \textup{2020} Mathematics Subject Classification}
\makeatother

\begin{document}
	\title[Nef Cones of Hilbert Schemes of Orthogonal Grassmannians]{Nef Cones of Hilbert Schemes of Orthogonal Grassmannians}
	\author{Minyoung Jeon}
	  \address{Department of Mathematics, Seoul Women’s University, Seoul 01797, Republic of Korea}\email{\url{minyoung.jeon@swu.ac.kr}}

\subjclass[2020]{Primary 14C17, 14C05, 14M15 ; Secondary 14N15, 14C20} %
\keywords{Nef cones, Hilbert schemes, orthogonal Grassmannians}
\date{\today}

\begin{abstract}
We show the number of connected components of the Hilbert scheme of orthogonal Grassmannians under certain condition, and use this result to describe the geometry of the Hilbert scheme. Subsequently, we determine the Nef cone of the Hilbert scheme by identifying curves dual to the generators of its Néron–Severi group. Our results generalize those of ordinary Grassmannians by Seong, and our approach adapts his proof technique alongside relevant Schubert calculus.

\end{abstract}
\maketitle 
\setcounter{tocdepth}{2}
\setcounter{tocdepth}{1}

\section{Introduction}

 The nef cone is a fundamental invariant of a projective variety. It reflects the positivity properties of its divisors and is closely related to ampleness, projective embeddings, and the birational geometry of the variety. Nef cones have been described for various projective varieties, such as those studied in \cites{Miyaoka,BP14}. They have also been studied for several Hilbert schemes, for instance in \cites{BC13,BHLRSWZ}. In particular, the nef cone of the Hilbert scheme of hypersurfaces in a Grassmannian was computed in \cite{Seong}. In this paper, we extend this perspective to orthogonal Grassmannians and determine the generators of the nef cone of the corresponding Hilbert scheme.

It is known from \cite{Hart} that the Hilbert scheme $\mathrm{Hilb}_{P(t)}(X)$ parametrizing subschemes of a projective variety $X$ with Hilbert polynomial $P(t)$ is connected. On the other hand, Seong \cite{Seong20} showed that the Hilbert scheme $\mathrm{Hilb}_{P(T)}(G(k,n))$ of the ordinary Grassmannian $G(k, n)$  need not be connected, where $G(k, n)$ parametrizes $k$-planes in an $n$-space. In this direction we show in Theorem \ref{TheoremA} that the Hilbert scheme $\mathrm{Hilb}_{P_{d,r}(t)}(OG(q,V))$ is not necessarily connected. Here $OG(q,V)$ denotes orthogonal Grassmannians parametrizing the $q$-dimensional isotropic subspaces in an $N$-dimensional space $V$, and $P_{d,r}(t)$ is a polynomial 
\[
P_{d,r}(t):=\binom{t+r}{r}-\binom{t+r-d}{r}.
\]

\begin{introthm}\label{TheoremA}
 Let $m=\mathrm{min}\{\lfloor (N-1)/2 \rfloor-q,q-1\}$.
If $2<r\leq m$, there are $2$ connected components in $\mathrm{Hilb}_{P_{d,r}(t)}(OG(q;V))$. 
In particular, if $m=q-1$, for $q-1<r\leq \lfloor (N-1)/2 \rfloor-q$, there is $1$ connected component in $\mathrm{Hilb}_{P_{d,r}(t)}(OG(q;V))$. 

Consequently, if $d\geq 3$, $2<r\leq \mathrm{min}\{\lfloor(N-1)/2\rfloor-q,q-1\}$, then the Hilbert scheme $\mathrm{Hilb}_{P_{d,r}(t)}(OG(q;V))$ is disconnected.
\end{introthm}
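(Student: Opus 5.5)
Following Seong's argument for $G(k,n)$, I will classify the $r$-dimensional linear spaces $\Lambda\cong\mathbb{P}^r$ in $OG(q;V)$, in its Plücker (spinor-free) embedding, and show that every point of $\mathrm{Hilb}_{P_{d,r}(t)}(OG(q;V))$ is a degree $d$ hypersurface in some such $\mathbb{P}^r$. The Hilbert polynomial $P_{d,r}(t)$ is exactly that of a degree $d$ hypersurface in $\mathbb{P}^r$. For $d\geq 3$ (and $r\geq 2$), a subscheme of projective space with this Hilbert polynomial is such a hypersurface: its linear span is forced to be an $r$-plane, by the standard Gotzmann/Castelnuovo-type argument Seong uses. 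So
\[
\mathrm{Hilb}_{P_{d,r}(t)}(OG(q;V))\;\cong\;\mathbb{P}\bigl(\mathrm{Sym}^d S^\vee\bigr)\text{-bundle over } F_r(OG(q;V)),
\]
where $F_r$ is the Fano scheme of $r$-planes contained in $OG(q;V)\subset G(q,V)\subset \mathbb{P}(\wedge^q V)$. The number of connected components of the Hilbert scheme then equals that of $F_r(OG(q;V))$. For $d\le 2$ this identification fails, which is why the final sentence needs $d\geq3$.

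To count components of $F_r$, I use the known description of linear spaces in $G(q,V)$. Every $\mathbb{P}^r\subset G(q,V)$ with $r\geq 2$ is of exactly one of two types:
\begin{itemize}
\item type I: $\{\Lambda : W_{q-1}\subset \Lambda\subset W_{q+r}\}$;
\item type II: $\{\Lambda : W_{q-r}\subset\Lambda\subset W_{q+1}\}$, where $\dim W_{q-r}=q-r$ and $\dim W_{q+1}=q+1$.
\end{itemize}
For $r\ge2$ these families are disjoint and closed. I then impose isotropy.

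For type I, all $\Lambda$ are isotropic iff $W_{q-1}$ is isotropic and $W_{q+r}\subset W_{q-1}^\perp$ with $W_{q+r}/W_{q-1}$ totally isotropic in $W_{q-1}^\perp/W_{q-1}$. The latter is a nondegenerate space of dimension $N-2q+2$, so this requires $r+1\le \lfloor (N-2q+2)/2\rfloor$, i.e. $r\le\lfloor (N-1)/2\rfloor-q$, after the parity check I still need to carry out. In that range the parameter space is a flag variety $OF(q-1,q+r;V)$ (or a union of two, when $N$ is even and $q+r=N/2$, which I will check is excluded). This variety is irreducible and hence connected.

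For type II, all $\Lambda$ are isotropic iff $W_{q+1}$ is totally isotropic, which requires $q+1\le \lfloor N/2\rfloor$. In addition we need $q-r\ge 0$, and nonemptiness for $r\ge 2$ needs $r\le q$. The paper's bound is $r\le q-1$, presumably to avoid the degenerate $W_0$ case or spinor subtleties. The parameter space is the isotropic flag variety $OF(q-r,q+1;V)$, again connected.

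Consequently, for $2<r\le m$ both types occur and give exactly two components. If $m=q-1$ and $q-1<r\le\lfloor (N-1)/2\rfloor-q$, only type I survives, giving one component. The disconnectedness statement follows for $d\ge3$.

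The main obstacle is twofold. First, I must justify that the two types really form distinct connected components of $F_r(OG)$, i.e. that no family degenerates from one type to the other. This is handled by the fact that for $r\ge 2$ a plane cannot be of both types and each type locus is closed, as it is the image of a proper flag variety. Second, I must pin down the exact numerical bounds, including the parity of $N$ and the reason for the condition $r>2$ rather than $r\ge2$. In doing so I will check carefully whether $OG$ for even $N$ with $q=N/2-1$ contributes extra components (two families of maximal isotropic spaces). The hypotheses seem designed to exclude this, and I would verify it by comparing $\lfloor(N-1)/2\rfloor-q$ with the bound $N/2-q-1$ from isotropy of $W_{q+r}$.
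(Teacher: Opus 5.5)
Your route differs from the paper's. The paper's proof of Theorem~\ref{thm3.2} works in three steps:
\begin{itemize}
\item it shows (Lemma~\ref{lem3.1}) that the span $L$ of $X$ lies in $OG(q;V)$;
\item it uses the Buch--Kresch--Tamvakis Pieri rule to restrict $[L]$ to $\sigma_{\lambda^1}$ or $\sigma_{\lambda^2}$;
\item it invokes Hong--Mok homological rigidity to conclude that the $r$-planes of a fixed class form one connected family.
\end{itemize}
You instead use the classical classification of linear spaces in $G(q,V)$ plus polarization. This identifies the two families directly with the isotropic flag varieties $OF(q-1,q+r;V)$ and $OF(q-r,q+1;V)$, which is essentially the description the paper only reaches in Theorem~\ref{thm3.5}. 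Your route is more elementary: there is no Pieri bookkeeping and no rigidity theorem, and it yields the exact existence ranges $q+r\le\lfloor N/2\rfloor$ and $r\le q$. For $2<r\le m$ it correctly gives two components, hence the disconnectedness. Your parity worry is harmless: for $N$ even the hypothesis forces $q+r\le N/2-1$, so the splitting at $q+r=N/2$ never occurs. Also, nothing in your argument needs $r>2$ rather than $r\ge 2$.

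Two points need fixing.

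\textbf{The span must lie in $OG(q;V)$.} You never explain why the $r$-plane spanned by $X$ lies in $OG(q;V)$. The Gotzmann/Castelnuovo step only places $X$ in some $r$-plane $L\subset\mathbb{P}(\wedge^q V)$. You need that $OG(q;V)$ is cut out by quadrics there. Then, if $L\not\subset Q$ for one of them, $X$ would lie in the quadric hypersurface $L\cap Q$ of $L$, which is impossible for $d\ge 3$. This is the paper's Lemma~\ref{lem3.1}, and it is where $d\ge 3$ is really used; a quadric in $L$ still spans $L$.

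\textbf{The case $r=q$ contradicts your last paragraph.} More seriously, your final step contradicts your own analysis. You showed type II planes exist whenever $r\le q$, and at $r=q$ nothing is degenerate about $W_0=0$:
\begin{itemize}
\item $\{\Lambda\subset W_{q+1}\}=\mathbb{P}(\wedge^q W_{q+1})$ is a linear $\mathbb{P}^q$ in $OG(q;V)$;
\item it is parametrized by the connected variety $OG(q+1;V)$;
\item $q+1<N/2$ under the standing assumptions, so there are no spinor issues;
\item since $r=q\ge 3$, this locus is disjoint from the type I locus.
\end{itemize}
Whenever $2q\le\lfloor (N-1)/2\rfloor$, the value $r=q$ lies in the range of the ``one component'' clause, yet your method produces two components. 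For example, take $N=13$ and $q=r=3$: the type I planes are $\{W_2\subset\Lambda\subset W_6\}$ and the type II planes are $\{\Lambda\subset W_4\}$.

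So ``only type I survives'' is false at $r=q$, and the one-component claim can only be proved for $q<r\le\lfloor (N-1)/2\rfloor-q$. The paper's proof has the same off-by-one error. It asserts without justification that $\sigma_{\lambda^2}$ occurs only for $r\le q-1$, apparently inherited from its convention $p_1=q-r\ge 1$ in Theorem~\ref{thm3.5}.
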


To prove Theorem \ref{TheoremA}, we view $\mathrm{Hilb}_{P_{d,r}(t)}(OG(q,V))$ as degree $d$ hypersurfaces in $r$-dimensional projective spaces of $OG(q,V)$. We then use Schubert Calculus on the orthogonal Grassmannian to establish the disconnectedness. Furthermore, we describe the geometry of the Hilbert scheme $\mathrm{Hilb}_{P_{d,r}(t)}(OG(q,V))$ as a projective bundle over a disjoint union of twoaf isotropic flag varieties. See Theorem \ref{thm3.5}.

Using this geometric description, we provide the generators of the nef cone of the Hilbert scheme, as in Theorem \ref{TheoremB}. Specifically, four generators of the nef cone are obtained as pull-backs of generators of the Nef cones from the two isotropic flag varieties. Two of these generators come from each flag variety. The other two generators are given by the divisors $D_{Y_{\widetilde{\lambda^1}}}$ and $D_{Y_{\widetilde{\lambda^2}}}$, which will be defined later in Lemma \ref{lem3.7}. In other words,

\begin{introthm}\label{TheoremB}
The Nef cone of the Hilbert scheme $\mathrm{Hilb}_{P_{d,r}(t)}(OG(q;V))$ is a cone generated by $6$ classes by the pull-backs of generators of the Nef cones of two-step isotropic flag varieties and classes of divisors $D_{Y_{\widetilde{\lambda^1}}}$ and $D_{Y_{\widetilde{\lambda^2}}}$.
\end{introthm}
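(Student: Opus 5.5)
The plan is to reduce Theorem \ref{TheoremB} to the structural description announced in the introduction (Theorem \ref{thm3.5}). In the range $2<r\leq m$ that description presents $\mathrm{Hilb}_{P_{d,r}(t)}(OG(q;V))$ as a disjoint union $H_1\sqcup H_2$. Each $H_i$ is a projective bundle $\mathbb{P}(\mathrm{Sym}^d \mathcal{S}_i^\vee)\to F_i$, where $F_i$ is a two-step isotropic flag variety and $\mathcal{S}_i$ is the rank $r+1$ tautological bundle whose projectivization is the linear $\mathbb{P}^r\subset OG(q;V)$ attached to a point of $F_i$.

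I would first show that a degree $d\geq 3$ hypersurface in such a $\mathbb{P}^r$ determines that $\mathbb{P}^r$; this lets the hypersurface be recovered from the equation and identifies $H_i$ with the projective bundle. Next I would record the Picard groups. Each $F_i$ is a partial flag variety with two steps, so $\mathrm{Pic}(F_i)\cong\mathbb{Z}^2$, generated by the pullbacks of the Schubert divisor classes of the two isotropic Grassmannians onto which it projects. Hence $\mathrm{Pic}(H_i)\cong \mathbb{Z}^3$, generated by those two pullbacks together with $\mathcal{O}_{\mathbb{P}}(1)$. Since the Hilbert scheme is the disjoint union of $H_1$ and $H_2$, its N\'eron--Severi group has rank $6$, and its nef cone is the product of the nef cones of $H_1$ and $H_2$. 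The statement's count of $6$ generators follows once the nef cone of each $H_i$ is shown to be simplicial with three generators.

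For each $H_i$, the two pulled-back nef generators of $F_i$ are nef because pullbacks of nef classes are nef. For the third generator I would take the divisor $D_{Y_{\widetilde{\lambda^i}}}$ of Lemma \ref{lem3.7}. Following Seong, it should be the locus of hypersurfaces meeting a fixed Schubert variety $Y_{\widetilde{\lambda^i}}\subset OG(q;V)$ of complementary dimension $(r-1)$-codimension-type, so that a generic $\mathbb{P}^r$ meets it in finitely many points. I would check nefness by expressing this class as a positive combination of $\mathcal{O}_{\mathbb{P}}(1)$ and pullbacks from $F_i$; the natural candidate is $d\,\mathcal{O}_{\mathbb{P}}(1)$ plus a pulled-back Schubert class. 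I would then verify that $D_{Y_{\widetilde{\lambda^i}}}$ is basepoint free by moving $Y_{\widetilde{\lambda^i}}$ with the transitive action of $SO(V)$, which shows that no hypersurface lies in every translate of the divisor.

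To see that these three classes span the whole nef cone, I would exhibit three curves in each $H_i$ forming a dual basis:
\begin{enumerate}
\item a line in a fiber of the projective bundle, i.e.\ a pencil of hypersurfaces in a fixed $\mathbb{P}^r$;
\item two curves that vary the flag along one Schubert line in each of the two Grassmannian directions, while carrying the hypersurface along as a fixed cone or join so that it avoids the chosen $Y_{\widetilde{\lambda^i}}$.
\end{enumerate}
The intersection numbers would be computed by Schubert calculus on $OG(q;V)$ and on the isotropic Grassmannians. The goal is to show that the intersection matrix between the three divisors and the three curves is the identity, or at least triangular with positive diagonal. This matrix computation is the main obstacle. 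The orthogonal Schubert calculus, with its Pieri rules for the types $B$ and $D$ and the special behaviour of maximal isotropic subspaces, must produce a curve in each Grassmannian direction that has intersection $0$ with $D_{Y_{\widetilde{\lambda^i}}}$. I would try first to build these curves by holding a hyperplane section of the hypersurface fixed and moving only one isotropic subspace inside a fixed larger isotropic space, which forces the intersection with $Y_{\widetilde{\lambda^i}}$ to vanish for dimension reasons.
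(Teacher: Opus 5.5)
Your outline matches the paper's. Theorem \ref{thm3.5} splits the Hilbert scheme as $\mathcal{M}_1\sqcup\mathcal{M}_2$, each component of Picard number $3$. On each component one needs a fibre pencil plus two curves over Schubert lines of $\mathcal{X}_i$, forming a dual basis to $(\pi_1^i\circ\varphi_i)^*\sigma_1^{(1,i)}$, $(\pi_2^i\circ\varphi_i)^*\sigma_1^{(2,i)}$ and $[D_{Y_{\widetilde{\lambda^i}}}]$. Your basepoint-freeness argument is a genuine addition: translate $Y_{\widetilde{\lambda^i}}$ by $SO(V)$, and note that a fixed $(r-1)$-dimensional $X$ misses a general translate of a codimension-$r$ subvariety. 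The paper never checks that $D_{Y_{\widetilde{\lambda^i}}}$ is nef, and Lemma \ref{l:gen} needs that hypothesis.

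The gap is in the step you call the main obstacle: the mechanism you propose fails in one of the two flag directions. Take $i=1$, fix $E_{q-1}=\widetilde F_{q-1}$, and move $E_{q+r}=E_v$ in a pencil $\widetilde F_{q+r-1}\subset E_v\subset\widetilde F_{q+r+1}$.
\begin{itemize}
\item The planes $\mathbb{P}^r_v=\mathbb{P}(E_v/\widetilde F_{q-1})$ lie in $\mathbb{P}(\widetilde F_{q+r+1}/\widetilde F_{q-1})\cong\mathbb{P}^{r+1}$ and share the hyperplane $H=\mathbb{P}(\widetilde F_{q+r-1}/\widetilde F_{q-1})$.
\item Write $Y_{\widetilde{\lambda^1}}=\{U: U\cap F\neq 0\}$. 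Then the point $W_v=\mathbb{P}^r_v\cap Y_{\widetilde{\lambda^1}}$ is $\widetilde F_{q-1}+(E_v\cap F)$.
\item As $E_v\cap F$ runs over the lines of the fixed plane $\widetilde F_{q+r+1}\cap F$, the points $W_v$ sweep a line $\Lambda\subset\mathbb{P}^{r+1}$ with $\Lambda\cap H=\emptyset$ and $\Lambda\cap\mathbb{P}^r_v=\{W_v\}$.
\end{itemize}
Suppose your $X_v$ sweep out a hypersurface of $\mathbb{P}^{r+1}$. This is exactly what happens when you hold a proper hyperplane section fixed, or restrict a fixed hypersurface of $\mathbb{P}^{r+1}$. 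That hypersurface meets $\Lambda$, and any point of $\Lambda\cap X_v$ is $W_v$. So $[D_{Y_{\widetilde{\lambda^1}}}]\cdot\gamma''>0$; it equals $d$ for the restriction of a general hypersurface. Dimension counting works against you here, not for you.

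The missing idea is to choose the family according to how $W$ moves.
\begin{itemize}
\item Over this pencil, the only family whose members all miss $Y_{\widetilde{\lambda^1}}$ is $X_v=d\cdot H$, the $d$-fold common hyperplane. It misses $W_v$ because $\Lambda\cap H=\emptyset$. It has intersection $1$ with $(\pi_2^1\circ\varphi_1)^*\sigma_1^{(2,1)}$ and $0$ with the other two classes.
\item In the other direction, $E_p=E_{q-1}$ moves in $\widetilde F_{q-2}\subset E_p\subset\widetilde F_q$ with $E_{q+r}=\widetilde F_{q+r}$ fixed. The planes share only the point $[\widetilde F_q]$, and $W_p=E_p+\ell$ with $\ell=\widetilde F_{q+r}\cap F$ independent of $p$. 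So the image of $W_p$ in $\widetilde F_{q+r}/\widetilde F_q$ is constant. Cones with vertex $[\widetilde F_q]$ over a fixed hypersurface of $\mathbb{P}(\widetilde F_{q+r}/\widetilde F_q)$ avoiding that point then work; this is the paper's $\gamma_1'$.
\item For $i=2$ the directions swap. Moving $E_{q-r}$, the planes share the hyperplane $\{U\supset\widetilde F_{q-r+1}\}$; moving $E_{q+1}$, they share only the point $[\widetilde F_q]$.
\end{itemize}
Be aware that the paper's own $\gamma_1''$ and $\gamma_2'$ are restrictions of a general hypersurface of the ambient $\mathbb{P}^{r+1}$. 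They have exactly this defect and should not be copied.

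Two smaller corrections.
\begin{itemize}
\item $D_{Y_{\widetilde{\lambda^i}}}$ restricts to a hyperplane on each fibre (the hypersurfaces through the single point $L\cap Y_{\widetilde{\lambda^i}}$). So its coefficient on $\mathcal{O}_{\mathbb{P}}(1)$ is $1$, not $d$. For $i=1$ the three test curves give $D_{Y_{\widetilde{\lambda^1}}}\equiv\mathcal{O}_{\mathbb{P}}(1)+d\,(\pi_2^1\circ\varphi_1)^*\sigma_1^{(2,1)}$. Moreover $\mathcal{O}_{\mathbb{P}}(1)$ has degree $-d$ on the family $d\cdot H$, so it is not nef, and a positive-combination argument proves nothing.
\item A triangular but non-diagonal intersection matrix is not enough. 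The curves then only confine the nef cone to a cone strictly larger than the span of the three divisors. The off-diagonal entries must vanish exactly.
\end{itemize}
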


The explicit generators for the nef cone can be found in Theorem \ref{main}. For this theorem, we construct irreducible curves in the Hilbert scheme that play the role of dual classes to the generators of the N\'eron-Severi group of the Hilbert scheme. These curves arise from incidence relations derived from Schubert varieties in orthogonal Grassmannians.  

The paper is organized as follows. In \S\ref{sec2}, we provide necessary background on Hilbert schemes, Nef cones, and orthogonal Grassmannians. In \S\ref{sec3}, we discuss the geometry of the Hilbert scheme and present our first main result on its connectedness. In the last section \S\ref{sec4}, we compute the generators of the Nef cone of the Hilbert scheme as our main result.

\section{Preliminary}\label{sec2}
In this section, we review some facts on the Hilbert Schemes, N\'eron-Severi group and Grassmannians that will be used later in the paper.

Let $G$ be a connected, simply-connected, semisimple algebraic group $G$ of rank $n$ over complex numbers $\mathbb{C}$ and fix a maximal torus $T$ and Borel subgroup $B\subset G$ containing $T$. Here we take the set of roots in $B$ to be positive $R^+$. We also let $B^-$ be the opposite Borel subgroup corresponding to the negative roots. Let $\Delta=\{\alpha_1,\ldots,\alpha_n\}\subset\mathfrak{h}^*$ denote the set of simple roots in $R^+$. Let $W=N_G(T)/T$ be the corresponding Weyl group, which is generated by the simple reflections $\{s_1,\ldots,s_n\}$ where $s_i=s_{\alpha_i}$. 

There is a bijection between parabolic subgroups $P \subseteq G$ containing $B$ and subsets $\Delta_P \subseteq \Delta$, where $\Delta_P$ is the set of simple roots generating the Levi subgroup of $P$. Let $W_P \subseteq W$ be the Weyl group generated by the simple reflections in $\Delta_P$, and let $W^P$ denote the set of minimal length representatives for the quotient $W/W_P$. For any $w \in W^P$, we define the Schubert variety $\Omega_{w,P}$ as the closure of the corresponding $B^-$-orbit in $G/P$:
\[\Omega_{w,P} = \overline{B^- w P}/P.
\]
This variety has codimension $\ell(w)$, and we denote its fundamental class by 
\[
[\Omega_{w,P}] \in A^{\ell(w)}(G/P).\]

Given an increasing sequence $\mathbf{d}=(0<d_1<d_2<\cdots<d_s\leq n)$ of integers of length $1\leq s\leq n$, we denote by $P_{\mathbf{d}}$ the parabolic subgroup associated with the set of Levi roots $\Delta_P=\Delta\backslash \{\alpha_{d_1},\alpha_{d_2},\ldots,\alpha_{d_s}\}$, and let $G/P_{\mathbf{d}}$ be the corresponding partial flag variety. Let $P_{k}\subset G$ be the maximal parabolic subgroup associated with the omission of the simple root $\alpha_k$. Here the simple roots are indexed as in \cite{Humphreys}.

\subsection{Hilbert schemes}
We fix integers $r\geq 2, d\geq 3$ and $\mathcal{N}$ such that $ r\leq \mathcal{N}$.
Let $X$ be a degree $d$ hypersurface in $\mathbb{P}^r$. Recall the Hilbert polynomial 
$
P_{d,r}(t):=\binom{t+r}{r}-\binom{t+r-d}{r}
$
of $X$.

It is known from \cite[Theorem 3.1]{Seong} that for arbitrary subvarieties $X$ in $\mathbb{P}^{\mathcal{N}}$, the space of degree $d$ hypersurfaces of $\mathbb{P}^r$ in $X$ agrees with the Hilbert scheme $\mathrm{Hilb}_{P_{d,r}(T)}(X)$. Hence, we have the following statement.

\begin{thm}
There is a one-to-one correspondence between the space of degree $d$ hypersurfaces of $\mathbb{P}^r$ in $G/P_{d_i}$ and the Hilbert scheme $\mathrm{Hilb}_{P_{d,r}(t)}(G/P_{d_i})$. 
\end{thm}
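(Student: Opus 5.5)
The plan is to realize $G/P_{d_i}$ as a closed subvariety of some projective space $\mathbb{P}^{\mathcal{N}}$ and then invoke \cite[Theorem 3.1]{Seong}, which asserts exactly this identification for an arbitrary projective subvariety $X\subset\mathbb{P}^{\mathcal{N}}$.

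\emph{Step 1: the embedding.} Since $P_{d_i}$ is the maximal parabolic obtained by omitting $\alpha_{d_i}$, the fundamental weight $\omega_{d_i}$ gives an ample generator $L_{\omega_{d_i}}$ of $\mathrm{Pic}(G/P_{d_i})\cong\mathbb{Z}$. By Borel--Weil, $H^0(G/P_{d_i},L_{\omega_{d_i}})\cong V(\omega_{d_i})^*$, and $G/P_{d_i}$ embeds $G$-equivariantly as the closed orbit of the highest weight line in $\mathbb{P}(V(\omega_{d_i}))=\mathbb{P}^{\mathcal{N}}$. This is the minimal homogeneous embedding; for $OG(q,V)$ with $q$ not the spinor index it is the Plücker embedding restricted from $G(q,N)$, and for the spinor case it is the spinor embedding. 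All Hilbert polynomials are taken with respect to this polarization.

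\emph{Step 2: apply Seong.} With $X=G/P_{d_i}\subset\mathbb{P}^{\mathcal{N}}$, the cited theorem says that a closed subscheme $Z\subset X$ with Hilbert polynomial $P_{d,r}(t)$ must be a degree $d$ hypersurface inside a linear $\mathbb{P}^r\subset\mathbb{P}^{\mathcal{N}}$, and that $\mathbb{P}^r\subset X$. Conversely, any degree $d$ hypersurface in a linear $\mathbb{P}^r$ lying in $X$ has Hilbert polynomial $P_{d,r}(t)$. The argument behind this uses $d\ge 3$: the linear span of $Z$ is a $\mathbb{P}^r$, and because $Z$ is a degree $d$ hypersurface with $d\geq 3$, every line in that $\mathbb{P}^r$ meets $Z$ in $d\geq 3$ points. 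Since $X$ is cut out by quadrics (Kostant), such a line is contained in $X$, so $\mathbb{P}^r\subset X$. This gives the set-theoretic bijection. The bijection also holds in families: the map $\mathrm{Hilb}_{P_{d,r}}(X)\to\mathrm{Hilb}_{P_{d,r}}(\mathbb{P}^{\mathcal{N}})$ is a closed immersion whose image is the locus of pairs ($\mathbb{P}^r$, hypersurface) with $\mathbb{P}^r\subset X$.

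\emph{Expected obstacle.} The main point to check is that the hypotheses of \cite[Theorem 3.1]{Seong} hold for $G/P_{d_i}$ in the chosen embedding. In particular, we need the ideal of $G/P_{d_i}$ to be generated in degree $2$, which is needed for the "line meets in $\geq 3$ points implies contained" step. I would cite Kostant's theorem that homogeneous spaces in their minimal embeddings are cut out by quadrics. If Seong's statement is truly for arbitrary $X$, no extra hypothesis is needed and Step 2 alone completes the proof.
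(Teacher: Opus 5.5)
Your proposal is correct and is essentially the paper's own argument: the paper proves this statement simply by applying \cite[Theorem 3.1]{Seong}, which holds for an arbitrary subvariety of projective space, to $G/P_{d_i}$ in its minimal embedding into $\mathbb{P}^{\mathcal{N}}$, exactly as in your Steps 1 and 2. Your quadric (Kostant) argument that the spanning $\mathbb{P}^r$ lies in $X$ is correct but not needed for this statement (and it is not part of Seong's Theorem 3.1, which could not assert it for arbitrary $X$); it is the content the paper establishes separately in Lemma~\ref{lem3.1}.
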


With the identification of the Hilbert scheme $\mathrm{Hilb}_{P_{d,r}(t)}(G/P_{d_i})$ with the space of degree $d$ hypersurfaces of $\mathbb{P}^r$ in $G/P_{d_i}$, we will investigate $\mathrm{Hilb}_{P_{d,r}(t)}(G/P_{d_i})$ for odd and even orthogonal Grassmannians in the later section \ref{sec3}.

\subsection{Nef cones}

The following statement appears in \cite[\S 1.3]{KPZ}. We take this occasion to provide explicit Schubert divisor classes here, as this result will be used later.

\begin{thm}\label{thm2.3}
Given $\pi_i:G/P_{\mathbf{d}}\rightarrow G/P_{d_i}$ the projection for $1\leq i\leq s$, the N\'eron-Severi group of $G/P_{\mathbf{d}}$ is generated by the pull-backs of the classes $[\Omega_{s_i,P_{d_i}}]\in A^1(G/P_{d_i})$ of the unique Schubert divisor for $s_i\in W^{P_{d_i}}$ of $G/P_{d_i}$.
\end{thm}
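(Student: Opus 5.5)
The plan is to compute the Picard group of $G/P_{\mathbf{d}}$ first and only then pass to the N\'eron--Severi group. Since $G$ is simply connected and semisimple, $\mathrm{Pic}(G/P_{\mathbf{d}})$ is identified with the character group $X(P_{\mathbf{d}})$ via $\lambda\mapsto \mathcal{L}_\lambda=G\times^{P_{\mathbf{d}}}\mathbb{C}_{-\lambda}$. The characters of $P_{\mathbf{d}}$ are the weights that vanish on the coroots of $\Delta_P$. Because $G$ is simply connected, these form the free abelian group spanned by the fundamental weights $\omega_{d_1},\dots,\omega_{d_s}$. This step uses only the simple connectedness of $G$ and the description of $P_{\mathbf{d}}$ by its Levi roots. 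Since $G/P_{\mathbf{d}}$ has a Bruhat cell decomposition, it has no odd cohomology and $H^1(G/P_{\mathbf{d}},\mathcal{O})=0$. Hence algebraic and numerical equivalence agree with linear equivalence for divisors, and $NS(G/P_{\mathbf{d}})=\mathrm{Pic}(G/P_{\mathbf{d}})=A^1(G/P_{\mathbf{d}})$.

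Next I will identify a basis of $A^1$ by Schubert classes. The Bruhat decomposition shows that $A^*(G/P_{\mathbf{d}})$ is freely generated by the classes $[\Omega_{w,P_{\mathbf{d}}}]$ for $w\in W^{P_{\mathbf{d}}}$. The classes of codimension one correspond to elements of length one in $W^{P_{\mathbf{d}}}$. These are exactly the simple reflections $s_{d_i}$ with $\alpha_{d_i}\notin\Delta_P$, that is, one for each $i=1,\dots,s$. So $A^1(G/P_{\mathbf{d}})$ is free of rank $s$ with basis $[\Omega_{s_{d_i},P_{\mathbf{d}}}]$. In the statement, $s_i$ should be read as the simple reflection $s_{d_i}$, which is the unique length-one element of $W^{P_{d_i}}$.

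The remaining step, which I expect to be the only real content, is to show that $\pi_i^*[\Omega_{s_{d_i},P_{d_i}}]=[\Omega_{s_{d_i},P_{\mathbf{d}}}]$. The map $\pi_i$ is a $G$-equivariant smooth fibration, so flat pullback of a $B^-$-orbit closure is the closure of its preimage: $\pi_i^{-1}(\overline{B^-s_{d_i}P_{d_i}}/P_{d_i})=\overline{B^-s_{d_i}P_{d_i}}/P_{\mathbf{d}}$. This preimage is irreducible of codimension one and $B^-$-stable, so it is a Schubert divisor of $G/P_{\mathbf{d}}$. To decide which one, note that it contains $B^-s_{d_j}P_{\mathbf{d}}$ exactly when $s_{d_j}\in W_{P_{d_i}}$ fails, i.e. when $j=i$. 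So its dense orbit is $B^-s_{d_i}P_{\mathbf{d}}$. An alternative route, which I would use as a check, goes through line bundles: $\mathcal{O}(\Omega_{s_{d_i},P_{d_i}})=\mathcal{L}_{\omega_{d_i}}$ on $G/P_{d_i}$, and $\pi_i^*\mathcal{L}_{\omega_{d_i}}=\mathcal{L}_{\omega_{d_i}}$ on $G/P_{\mathbf{d}}$. Together with the first step, this shows that the pullbacks form a $\mathbb{Z}$-basis of $NS(G/P_{\mathbf{d}})$, which proves the statement.
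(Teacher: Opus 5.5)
Your proof is correct, but it reaches the basis claim by a different and more self-contained route than the paper.

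The paper works one maximal parabolic at a time. It factors $\psi_i\colon G/P_{\mathbf d}\to\mathbb{P}(V_{\omega_{d_i}})$ as $\iota_i\circ f_i$, so that $\mathcal{L}_{\omega_{d_i}}$ on $G/P_{\mathbf d}$ is the pullback of the very ample generator $\iota_i^*\mathcal{O}(1)$ of $\mathrm{Pic}(G/P_{d_i})\cong\mathbb{Z}$. It identifies $c_1(\iota_i^*\mathcal{O}(1))$ with $[\Omega_{s_i,P_{d_i}}]$. For the decisive fact, that these pullbacks form a basis of $NS(G/P_{\mathbf d})$, it simply cites \cite[Proposition 4.2]{LRY}.

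Your ``alternative route'' is essentially this same argument, except that you actually prove the cited step via $\mathrm{Pic}(G/P_{\mathbf d})\cong X(P_{\mathbf d})=\bigoplus_i\mathbb{Z}\,\omega_{d_i}$. Your main route is genuinely different and purely Schubert-theoretic. Flat pullback, plus the test on the $T$-fixed points $s_{d_j}P_{\mathbf d}$, shows $\pi_i^*[\Omega_{s_{d_i},P_{d_i}}]=[\Omega_{s_{d_i},P_{\mathbf d}}]$. The test works because $eP_{d_i}\notin\Omega_{s_{d_i},P_{d_i}}$. The resulting classes are exactly the Schubert divisors that form a basis of $A^1(G/P_{\mathbf d})$.

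What your version buys:
\begin{itemize}
\item a self-contained argument;
\item an explicit justification of $NS=\mathrm{Pic}=A^1$, which the paper uses tacitly;
\item the sharper statement that the pulled-back classes are themselves Schubert divisors of the flag variety. This is precisely the form used in Section 4, where $(\pi_j^i)^{-1}(\Omega_1^{(j,i)})$ is paired with Schubert curves.
\end{itemize}
The paper's presentation instead keeps the generators visibly attached to the minimal embeddings $\iota_i$, which it uses immediately afterwards to place $G/P_{d_i}$ inside $\mathbb{P}^{\mathcal N}$.

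Your reading of $s_i$ as $s_{d_i}$ is also the right correction to the statement's notation.
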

\begin{proof}
Let $\{ \omega_{d_1},\ldots \omega_{d_s}\}$ be the set of fundamental weights corresponding to $\Delta\backslash \Delta_P$. For each maximal parabolic subgroup $P_{d_i}$, the Picard group $Pic(G/P_{d_i})\cong \mathbb{Z}$ is generated by the ample line bundle $\mathcal{L}_{\omega_{d_i}}$ on $G/P_{d_i}$ for $1\leq i\leq s$ \cite[P. 17]{BL}. Explicitly, for each $1\leq i\leq s$, $V_{\omega_{d_i}}$ denote the irreducible representation of $G$ with highest weight $\omega_i$. One can define a map 
\[
\psi_i: G/P_{\mathbf{d}}\rightarrow \mathbb{P}(V_{\omega_{d_i}})
\]
by sending $gP_{\mathbf{d}}$ to $g\cdot[v_i]$ where $[v_i] \in \mathbb{P}(V_{\omega_{d_i}})$ is the line spanned by a highest weight vector $v_i$. Since the stabilizer of the highest weight line is exactly is $P_{d_i}(\supset P_{\mathbf{d}})$ (\cite[Corollary 3.6]{Gross}), the image of the map $\psi_i$ is the $G$-orbit of $[v_i]$ in $\mathbb{P}(V_{\omega_{d_i}})$, and thus $\psi_i$ factors as
\begin{equation}\label{eqn2.1}
G/P_{\mathbf{d}}\xrightarrow{f_i} G/P_{d_i}\overset{\iota_i}{\hookrightarrow} \mathbb{P}(V_{\omega_{d_i}}).
\end{equation}
The Picard group $\mathrm{Pic}(G/P_{d_i})$ is generated by the line bundle $\mathcal{L}(\omega_{d_i}) := \iota_i^*(\mathcal{O}(1))$. The first Chern class of this bundle $c_1(\mathcal{L}_{\omega_{d_i}})$ is given by the Schubert class $\sigma^1_{s_i, P_{d_i}} = [\Omega_{s_i, P_{d_i}}]$ in the Chow group $A^1(G/P_{d_i})$. So, the identification $c_1(\mathcal{L}(\omega_{d_i})) = \sigma^1_{s_i, P_{d_i}}$ yields the isomorphism $\mathrm{Pic}(G/P_{d_i}) \cong A^1(G/P_{d_i})$. The pullbacks $f_i^*(\sigma^1_{s_i,P_{d_i}})$ form a basis for the Néron-Severi group $NS(G/P_{\mathbf{d}})$ (c.f. \cite[Proposition 4.2]{LRY}).
\end{proof}

In particular, the {\it minimal embedding} $\iota_i$ implies that the $G/P_{d_i}$ can be embedded into projective space $\mathbb{P}^\mathcal{N}_{\mathbb{C}}$, where the dimension $\mathcal{N}$ is determined by the Lie type of the group $G$. For the special linear group $G=SL(n)$ of type A, this dimension is given by 
$
\mathcal{N}=\binom{n}{d_i}-1.
$
For the special orthogonal group $G=SO(N)$ representing type B and type D, the dimension is given by
\[
\mathcal{N}=\binom{N}{d_i}-1.
\]
This applies to odd values of $N=2n+1$ for $n \geq 2$ with the restriction $d_i < n$. It similarly applies to even values of $N=2n$ for $n \geq 3$ with the restriction $d_i < n-1$.
See \cite[Pages 300-303]{OV90} for each dimention.

The following proposition summarizes the relationship between the Picard number of a smooth variety and its associated projective bundle, as derived in {\cite[Theorem 5.9]{EH16}}.

\begin{thm}\label{thm2.4}
Let $E$ be a vector bundle on a smooth variety $X$, and let $\pi: \mathbb{P}(E) \to X$ denote its projectivization. Then
$$\rho(\mathbb{P}(E)) = \rho(X) + 1,$$
where $\rho(X) = \mathrm{rank}(\mathrm{NS}(X))$ denotes the Picard number of $X$.
\end{thm}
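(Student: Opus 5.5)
The plan is to compute the Néron–Severi group of $\mathbb{P}(E)$ directly. I would first establish $\mathrm{Pic}(\mathbb{P}(E))\cong \pi^*\mathrm{Pic}(X)\oplus \mathbb{Z}\,\xi$ with $\xi=c_1(\mathcal{O}_{\mathbb{P}(E)}(1))$, and then pass from line bundles to numerical classes.

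\textbf{Step 1: the Picard group.} Let $e=\mathrm{rank}\,E$. Since $X$ is smooth, so is $\mathbb{P}(E)$, and we may identify $\mathrm{Pic}$ with $A^1$ on both. The projective bundle formula (as in \cite[Theorem 9.6]{EH16}) gives
\[
A^*(\mathbb{P}(E))\cong A^*(X)[\xi]\big/\bigl(\xi^{e}+c_1(E)\xi^{e-1}+\cdots+c_e(E)\bigr),
\]
with the sign convention matching the chosen projectivization. Reading off degree one yields $A^1(\mathbb{P}(E))=\pi^*A^1(X)\oplus\mathbb{Z}\xi$. Equivalently, one can argue by restriction to a fibre $\mathbb{P}^{e-1}$. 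A line bundle $L$ on $\mathbb{P}(E)$ restricts to $\mathcal{O}(a)$ on every fibre, with $a$ constant by connectedness. Then $L\otimes\mathcal{O}(-a)$ is trivial on each fibre, so $\pi_*$ of it is a line bundle $M$ on $X$ with $L\cong \pi^*M\otimes\mathcal{O}(a)$.

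\textbf{Step 2: pass to numerical equivalence.} Here $\mathrm{NS}$ is understood modulo numerical equivalence, or modulo algebraic equivalence and then tensored with $\mathbb{Q}$ so that rank is the relevant quantity. Since $\pi^*$ is injective on $A^1$, it suffices to show that $D=\pi^*\alpha+a\xi$ is numerically trivial only if $a=0$ and $\alpha\equiv 0$. Take a line $\ell$ in a fibre: then $\pi^*\alpha\cdot\ell=0$ and $\xi\cdot\ell=1$, so $D\cdot \ell=a$, which forces $a=0$. Next let $C\subset X$ be any curve and consider $\pi^*\alpha\cdot \widetilde C$ for a lift $\widetilde C$ of $C$. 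A lift exists: pick a line subbundle of $E|_{\widetilde C}$ after normalizing $C$, or take a multisection. The projection formula gives $\pi^*\alpha\cdot\widetilde C=\deg(\widetilde C\to C)\,(\alpha\cdot C)$. Hence $\pi^*\alpha\equiv 0$ if and only if $\alpha\equiv 0$. Conversely, numerically trivial classes on $X$ pull back to numerically trivial classes, again by the projection formula. Therefore $\mathrm{NS}(\mathbb{P}(E))\otimes\mathbb{Q}=\pi^*\mathrm{NS}(X)_\mathbb{Q}\oplus\mathbb{Q}\xi$, and $\rho(\mathbb{P}(E))=\rho(X)+1$.

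The main obstacle is the bookkeeping in Step 2. The lifting of curves needs care: over a normalization $\nu:\widetilde C\to C$, a rank-$e$ bundle on a smooth curve always has a line subbundle, which gives a section $\widetilde C\to\mathbb{P}(\nu^*E)\to\mathbb{P}(E)$. For the flag-variety applications later in the paper this step is unnecessary, since $\mathrm{Pic}=A^1=\mathrm{NS}$ there.
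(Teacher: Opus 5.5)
Your proof is correct and uses the same input as the paper, which offers no argument beyond citing the projective bundle theorem in \cite{EH16} for $A^1(\mathbb{P}(E))=\pi^*A^1(X)\oplus\mathbb{Z}\xi$; your Step 2 (a line in a fibre, lifted curves and the projection formula) supplies the passage from $A^1$ to numerical classes that the citation leaves implicit. You should state explicitly that both reading off degree one of the presentation and testing against a line $\ell$ in a fibre require $\mathrm{rank}\,E\ge 2$ (for a line bundle $\mathbb{P}(E)\cong X$, so the formula fails); the statement omits this hypothesis, but it holds in the paper's application to $\mathbb{P}(\mathrm{Sym}^d\mathcal{S}^\vee)$.
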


We can explicitly describe the Nef cone by exploiting the orthogonality relations between divisors and curves. As the next statement shows \cite[Lemma 2.3]{Seong}, the existence of a dual basis of curves is sufficient to identify the generators of the Nef cone.

\begin{lemma}\label{l:gen}
Let $X$ be a projective variety with Picard number $\rho(X) = n$. Suppose that the divisor classes $D_1, \ldots, D_n$ form a basis for the Néron-Severi group $\mathrm{NS}(X)$. If there exist irreducible curves $\gamma_1, \ldots, \gamma_n \subset X$ satisfying the intersection conditions
\[
D_i \cdot \gamma_j = \delta_{ij} \quad \text{for all } 1 \leq i, j \leq n,
\]
then the Nef cone of $X$ is generated by $D_1, \ldots, D_n$.
\end{lemma}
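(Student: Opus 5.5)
The statement follows from the duality between $\mathrm{NS}(X)$ and the space of numerical curve classes $N_1(X)$. The plan is to show first that the $D_i$ are nef, and then that every nef class is a nonnegative combination of them.

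For nefness, fix an irreducible curve $C\subset X$ and write its numerical class in $N_1(X)_{\mathbb{R}}$. Since $D_i\cdot\gamma_j=\delta_{ij}$, the classes $[\gamma_1],\dots,[\gamma_n]$ are linearly independent in $N_1(X)_{\mathbb{R}}$. This space has dimension $\rho(X)=n$, because the intersection pairing $\mathrm{NS}(X)_{\mathbb{R}}\times N_1(X)_{\mathbb{R}}\to\mathbb{R}$ is perfect by definition of numerical equivalence. Hence the $[\gamma_j]$ form the basis of $N_1(X)_{\mathbb{R}}$ dual to $\{D_i\}$. Nefness of each $D_i$ is not automatic from duality alone, so I would add the following observation. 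In the intended application, each $D_i$ is a pull-back of an ample or globally generated class: a Schubert divisor pulled back along $G$-equivariant maps, or a divisor pulled back from a projective bundle structure. Every globally generated divisor is nef. So I would either add the hypothesis that the $D_i$ are nef, or verify it case by case. This is the main obstacle, since the lemma as literally stated omits it, and in the original source \cite[Lemma 2.3]{Seong} it is part of the context that the $D_i$ are nef.

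For the reverse inclusion, let $D=\sum a_iD_i$ be a nef class. Since each $\gamma_j$ is an irreducible curve, nefness gives $0\le D\cdot\gamma_j=a_j$. So every $a_j\ge 0$, and $D$ lies in the cone spanned by $D_1,\dots,D_n$.

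Combining the two inclusions, once the $D_i$ are known to be nef, $\mathrm{Nef}(X)$ equals the cone $\sum\mathbb{R}_{\ge0}D_i$. Dually, the curves $\gamma_j$ span the extremal rays of $\overline{NE}(X)$.
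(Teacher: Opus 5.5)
Your diagnosis is correct. The paper gives no proof of this lemma; it only cites Seong's Lemma 2.3. As transcribed here, the statement is false unless the $D_i$ are assumed nef.

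\textbf{Counterexample.} Take $X=\mathrm{Bl}_p\mathbb{P}^2$, with $H^2=1$, $H\cdot E=0$, $E^2=-1$. Set $D_1=-E$ and $D_2=H$. Let $\gamma_1=E$, and let $\gamma_2$ be the strict transform of a line not through $p$. Then $\{D_1,D_2\}$ is a $\mathbb{Z}$-basis of $\mathrm{NS}(X)$, and $D_i\cdot\gamma_j=\delta_{ij}$. However, $D_1\cdot(H-E)=-1$, where $H-E$ is the class of the strict transform of a line through $p$. So $-E$ is not nef, and $\mathrm{Nef}(X)$, which is spanned by $H$ and $H-E$, is not the cone spanned by $-E$ and $H$. (For $\rho(X)=1$ the statement does hold; it fails from $\rho\ge 2$ on.)

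Once the nef hypothesis is added, your two inclusions form the standard argument, and it is complete. Nefness of the $D_i$ gives one containment. The dual irreducible curves force $a_j=D\cdot\gamma_j\ge 0$ for every nef $D=\sum a_iD_i$, which gives the other. Your closing remark about $\overline{NE}(X)$ then follows by duality.

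\textbf{A correction to your side remark about the application.} The generator $D_{Y_{\widetilde{\lambda^i}}}$ is not pulled back from the projective bundle structure; only the two Schubert classes are. So its nefness needs a separate argument, and the paper never gives one. Your caveat therefore also bears on how the lemma is used in Theorem \ref{t:generators}.

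Nefness can be shown by moving $Y=Y_{\widetilde{\lambda^i}}$.
\begin{itemize}
\item Since $SO(N)$ is connected, each translate $D_{gY}=g\cdot D_Y$ is algebraically equivalent to $D_Y$.
\item Fix an irreducible curve $C\subset\mathcal{M}_i$ and a point $X\in C$. The variety $X$ has dimension $r-1$ and $gY$ has codimension $r$, so by Kleiman transversality a general translate $gY$ misses $X$.
\item Hence $C\not\subset D_{gY}$, and $D_Y\cdot C=D_{gY}\cdot C\ge 0$.
\end{itemize}
The pulled-back Schubert classes are nef as pull-backs of ample classes. With this step added, the corrected lemma applies.
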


\subsection{Orthogonal Grassmannians}

Let $V$ be a $N$-dimensional vector space over complex numbers, equipped with a non-degenerate symmetric bilinear form $\beta(\cdot,\cdot)$ on it. Let $E^\perp$ denote the orthogonal complement of $E$ with respect to $\beta$, defined as
\[
E^\perp = \{v \in V \mid \beta(v, u) = 0, \forall u \in E\}.
\]
 A $\mathfrak{m}$-dimensional linear subspace $E\subset V$ is called {\it isotropic} if $E\subset E^\perp$. We denote by $OG(q;V)$ the set of isotropic subspaces of dimension $q$ in $V$. 
 
For $q\leq\lfloor{N/2}\rfloor$, the orthogonal Grassmannain $OG(q;V)$ can be identified with $G/P_{q}$ where $G=SO(N)$ and $P_q$ is the maximal parabolic subgroup.

 Given a sequence 
 \[
 p_1<  p_2< \cdots<p_s\leq \lfloor N/2\rfloor
 \]
of integers, we consider a partial flag 
 \[
 \mathcal{F}_\bullet: 0= \mathcal{F}_{p_1}\subset  \mathcal{F}_{p_2}\subset \cdots\subset  \mathcal{F}_{p_s}\subset V
 \] 
 of isotropic subspaces of $V$ such that the dimensions of $ \mathcal{F}_{p_i}$ are $p_i$ for all $i$. The parabolic subgroup $P_I$ with $I=\{p_1,\ldots,p_s\}$ stabilizes the flag $ \mathcal{F}_\bullet$. We denote by $Fl^X(p_1,p_2,\ldots,p_s)=SO(N)/P_I$ the set of such isotropic flags where $X=B$ if $N$ is odd and $X=D$ if $N$ is even. And we call $Fl^B(p_1,p_2,\ldots,p_s)$ the flag variety of (Lie) type $B$ and $Fl^D(p_1,p_2,\ldots,p_s)$ of type $D$.

Let
$
\mathbf{k}: k_1<k_2<\cdots<k_s\;\text{and}\; \mathbf{p}:p_1<  p_2<\cdots<p_s\leq \lfloor N/2\rfloor 
$
be strictly increasing sequences satisfying
$
k_i-k_{i-1}\leq p_i-p_{i-1}.
$
We consider a fixed isotropic flag 
\[
F_{p_1}\subset F_{p_2}\subset \cdots \subset F_{p_s}\subset V
\]
whose dimensions are parametrized by $\mathbf{p}$.
Inside $OG(q:V)$, we can define the Schubert variety associated with a partition $\lambda:=\lambda(\mathbf{k},\mathbf{p},q)$ by
\[
\Omega_\lambda=\{\Sigma\mid \mathrm{dim}(\Sigma\cap F_{q_i})\geq k_i \}\subset OG(q;V).
\]
Here, the partition $\lambda:=\lambda(\mathbf{k},\mathbf{p},q)=(\lambda_1>\lambda_2>\cdots>\lambda_s)$ is defined by 
\[
\lambda_{k_i}=N-q-p_i
\]
and the rest of $\lambda$ is set by choosing the parts $\lambda_k$ minimally. The codimension of $\Omega_\lambda$ is given by $|\lambda|=\lambda_1+\cdots+\lambda_s$. We denote by 
\[
\sigma_\lambda:=[\Omega_\lambda]
\]
the cohomology class of the Schubert variety $\Omega_\lambda$.

Following \cite{AF}, there exists an element $w := w(\mathbf{k}, \mathbf{p},q) \in W^{P_q}$ corresponding to $\mathbf{k}$, $\mathbf{p}$, and $q$. As a result, $\Omega_{\lambda}$ coincides with the previously discussed Schubert variety $\Omega_{w,P_q}$.

\section{The geometry of the Hilbert schemes}\label{sec3}

In this section, we establish analogues of the connectedness results for the Hilbert scheme of orthogonal Grassmannians $OG(q; V)$, where $\dim V = N$, for general values of $q$ and $N$, paralleling \cite[\S 3]{Seong20} on ordinary Grassmannians, aside from certain statements on projective spaces and accompanying remarks. 

We begin with the following lemma describing the set of degree $d\geq 3$ hypersurfaces in a projective space of dimension $r\geq 1$ contained in an orthogonal Grassmannian. 
This lemma is necessary to compute the number of connected components of $\mathrm{Hilb}_{P_{d,r}(t)}(OG(q;V))$. 
Let $q<\lfloor N/2 \rfloor$, and set
\[
\mathcal{N}=\binom{N}{q}-1.
\]
Recall from \eqref{eqn2.1} that this represents the projective dimension associated with the minimal embedding $\iota_q:OG(q;V)=G/P_q\rightarrow \mathbb{P}^\mathcal{N}$.

\begin{lemma}\label{lem3.1}
Let $X$ be a degree $d$ hypersurface of $\mathbb{P}^r$ in $OG(q;V)$. Let $L$ be an $r$-dimensional projective space containing $X$ in $\mathbb{P}^\mathcal{N}$. 
Then $L$ is contained in $OG(q;V)$.
\end{lemma}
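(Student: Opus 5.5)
The plan is to use the fact that $OG(q;V)$, under the minimal (Plücker-type) embedding $\iota_q$ into $\mathbb{P}^{\mathcal N}=\mathbb{P}(\wedge^q V)$, is cut out by quadrics. We then argue that a degree $d\geq 3$ hypersurface $X\subset L\cong\mathbb{P}^r$ cannot lie on a quadric unless $L$ itself does. Concretely, $OG(q;V)=G(q,V)\cap \{\text{isotropy conditions}\}$. The Grassmannian $G(q,V)\subset\mathbb{P}(\wedge^qV)$ is the zero locus of the Plücker quadrics. The isotropy condition $E\subset E^\perp$ can be written as the vanishing of the contraction $\wedge^q V\to \wedge^{q-2}V$ induced by $\beta$, applied to decomposable vectors. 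Equivalently, if $\Lambda=v_1\wedge\cdots\wedge v_q$, the condition is $\beta(v_i,v_j)=0$ for all $i,j$, which is expressible through equations of degree at most $2$ in Plücker coordinates. So the first step is to record that the homogeneous ideal of $OG(q;V)$ in $\mathbb{P}^{\mathcal N}$ is generated in degrees $\leq 2$. This is standard for $G/P$ in the embedding by $\mathcal{L}_{\omega_q}$ (Kostant's theorem), and it applies directly since $\iota_q$ is the minimal embedding of \eqref{eqn2.1}.

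The second step is elementary. Let $F$ be any homogeneous polynomial of degree $e\leq 2$ vanishing on $OG(q;V)$, hence on $X$. Restrict $F$ to $L\cong \mathbb{P}^r$ to get $F|_L\in H^0(\mathcal{O}_{\mathbb{P}^r}(e))$ vanishing on $X$. Because $X$ is a hypersurface of degree $d$, its ideal in $L$ is generated by a single form $g$ of degree $d$. Hence $F|_L=g\cdot h$, which forces $F|_L=0$ because $e\leq 2<3\leq d$. Scheme-theoretically this is the vanishing $H^0(\mathcal{I}_{X/L}(e))=H^0(\mathcal{O}_{\mathbb{P}^r}(e-d))=0$. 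Since every generator of the ideal of $OG(q;V)$ vanishes on $L$, we conclude $L\subset OG(q;V)$. We also need $L$ to be determined by $X$, namely as the linear span of $X$. This holds because a degree $d$ hypersurface spans $\mathbb{P}^r$ (again $H^0(\mathcal{I}_X(1))=0$), so the $L$ in the statement is unique.

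The main obstacle is the first step: justifying that $OG(q;V)\subset\mathbb{P}^{\mathcal N}$ is defined by quadrics with the stated $\mathcal N=\binom{N}{q}-1$. Here I would point out that for $q<\lfloor N/2\rfloor$ the representation $V_{\omega_q}=\wedge^qV$ is irreducible, so $\iota_q$ is the restriction of the Plücker embedding. I would then cite the general theorem that the ideal of a minimal homogeneous embedding $G/P\subset\mathbb{P}(V_\omega)$ is generated by quadrics. As a fallback that avoids the citation, one can check the statement directly: $X\subset G(q,V)$ gives $L\subset G(q,V)$ by the same argument applied to the Plücker quadrics. The pairing $(E,E)\mapsto \beta|_E$ then defines a section of $\mathrm{Sym}^2 S^\vee$ on $G(q,V)$. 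Its restriction to $L$ is a section of a bundle of the form $\mathcal{O}_{\mathbb{P}^r}(a)$-summands with $a\leq 2$, since on a linear space of $q$-planes $S^\vee$ has degree $1$. Vanishing on $X$ then forces vanishing on $L$ by the same degree count.
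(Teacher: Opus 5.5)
Your proposal is correct and is essentially the paper's proof: $OG(q;V)\subset\mathbb{P}(\wedge^qV)$ is cut out by the Pl\"ucker quadrics together with quadratic isotropy equations (the paper takes these explicitly from El Maazouz--Mandelshtam, while you invoke Kostant's quadratic generation), and Seong's observation that a degree $d\geq 3$ hypersurface cannot lie on a quadric of $L$ unless the quadric contains $L$ then forces $L$ into each of them; your $H^0(\mathcal{I}_{X/L}(e))=H^0(\mathcal{O}_{\mathbb{P}^r}(e-d))=0$ makes that step scheme-theoretically precise. Two harmless slips: for symmetric $\beta$ there is no well-defined contraction $\wedge^qV\to\wedge^{q-2}V$ (that is the symplectic picture), so isotropy is only the quadratic condition in your ``equivalently'' sentence; and in your fallback, $S^\vee$ restricted to the family $\{A\subset U\subset B\}$ with $\dim B=q+1$ is $T_{\mathbb{P}^r}(-1)\oplus\mathcal{O}^{q-r}$ rather than a sum of line bundles, though restricting to lines, on which $S^\vee$ is $\mathcal{O}(1)\oplus\mathcal{O}^{q-1}$, still gives the vanishing you need.
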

\begin{proof}

The orthogonal Grassmannian $OG(q;V)$ is cut out in $\mathbb{P}(\wedge^qV)$ by the vanishing of the quadratic hypersurfaces $Q_i$ derived from the Pl\"ucker relations and other specific quadratic equations $\mathcal{K}_i$. That is, 
\[
 OG(q;V)=(\cap Q_i) \cap \left(\cap_i\mathcal{K}_i\right)=Gr(q,2n)\cap \left(\cap_i\mathcal{K}_i\right).
\]
Detailed expressions for $Q_i$ and $\mathcal{K}_i$ can be found in \cite[Theorem 2.1]{EM}.
 
Since $X$ lies in both $OG(q;V)$ and $L$, it must be contained in the intersection $(\bigcap Q_i) \cap L \cap \left(\cap_i\mathcal{K}_i\right)$. By the proof of \cite[Theorem 3.2]{Seong}, we know that $L \subset Q_i$ for all $i$, so the containment simplifies to $X \subset L \cap \left(\cap_i\mathcal{K}_i\right)$. In fact, we can also conclude that $L\cap \mathcal{K}_i=L$ for all $i$ by the exact same argument as in \cite[Theorem 3.2]{Seong}. That is, for any $i$, if $L$ is not fully contained in $\mathcal{K}_i$, the intersection $L \cap \mathcal{K}_i$ would be a quadric inside $L$. This leads to a contradiction, as a hypersurface $X$ of degree $d \geq 3$ cannot be contained in the quadratic hypersurface of $L$. Hence $L\cap \mathcal{K}_i=L$, which proves the claim.
\end{proof}

We now ready to get the number of connected components in $\mathrm{Hilb}_{P_{d,r}(t)}(OG(q;V))$, as follows. Our standing assumption for the rest of this manuscript is 
\begin{equation}\label{e:assumption}
1<q< \left\lfloor \dfrac{N-1}{2}\right\rfloor , \;d\geq 3, \text{ and } 2<r\leq  \left\lfloor \dfrac{N-1}{2}\right\rfloor -q.
\end{equation}

\begin{thm}\label{thm3.2}
 Let $m=\mathrm{min}\{\lfloor (N-1)/2 \rfloor-q,q-1\}$.
If $2<r\leq m$, there are $2$ connected components in $\mathrm{Hilb}_{P_{d,r}(t)}(OG(q;V))$. 
In particular, if $m=q-1$, for $q-1<r\leq \lfloor (N-1)/2 \rfloor-q$, there is $1$ connected component in $\mathrm{Hilb}_{P_{d,r}(t)}(OG(q;V))$. 
\end{thm}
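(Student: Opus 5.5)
By Lemma \ref{lem3.1}, every point of $\mathrm{Hilb}_{P_{d,r}(t)}(OG(q;V))$ is a degree $d$ hypersurface $X$ in a linear space $L\cong\mathbb{P}^r$ with $L\subset OG(q;V)$, and $L$ is the linear span of $X$. Since $X$ has degree $d\geq 3$, it is not contained in a hyperplane of $L$, so $L$ is uniquely determined by $X$. This gives a morphism $\mathrm{Hilb}_{P_{d,r}(t)}(OG(q;V))\to F_r(OG(q;V))$ to the Fano variety of $r$-planes in $OG(q;V)\subset\mathbb{P}^{\mathcal N}$. Its fibre over $L$ is the projective space $\mathbb{P}(H^0(\mathcal{O}_L(d)))$ of degree $d$ hypersurfaces in $L$. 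Hence the connected components of the Hilbert scheme correspond bijectively to those of $F_r(OG(q;V))$, and the whole problem reduces to classifying linear $\mathbb{P}^r$'s in $OG(q;V)$ under the Plücker embedding.

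The key step is that classification. A line in $\mathbb{P}(\wedge^q V)$ lying in $G(q,V)$ has the form $\{\Sigma : A\subset\Sigma\subset B\}$ with $\dim A=q-1$ and $\dim B=q+1$. Iterating this, as in \cite{Seong20} for ordinary Grassmannians, every linear $\mathbb{P}^r\subset G(q,V)$ with $r\geq 2$ is of one of two types.
\begin{enumerate}[label=(\roman*)]
\item $\{\Sigma: A\subset\Sigma\subset B\}$ with $\dim A=q-1$ and $\dim B=q+r$.
\item $\{\Sigma: C\subset\Sigma\subset D\}$ with $\dim C=q-r$ and $\dim D=q+1$.
\end{enumerate}
These two types are distinct because $r\geq 2$. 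I would then impose isotropy of every $\Sigma$ in the family.
\begin{itemize}
\item In case (i), isotropy forces $A$ to be isotropic and $B\subset A^\perp$, with $B/A$ a totally isotropic $(r+1)$-dimensional subspace of $A^\perp/A$. So $B$ itself is isotropic of dimension $q+r$.
\item In case (ii), isotropy forces $D$ to be isotropic of dimension $q+1$, with $C\subset D$ arbitrary.
\end{itemize}
Thus the type (i) planes are parametrized by the isotropic flag variety $Fl^X(q-1,q+r)$, and the type (ii) planes by $Fl^X(q-r,q+1)$. Type (i) is nonempty exactly when $q+r\leq\lfloor N/2\rfloor$; the hypothesis $r\leq\lfloor (N-1)/2\rfloor-q$ guarantees this, and it also keeps us away from the middle dimension in type $D$, so the flag variety is irreducible. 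Type (ii) is nonempty exactly when $q-r\geq 0$. For type (ii) to give genuinely $r$-dimensional families distinct from type (i), I would use $r\leq q-1$, so that $C\neq 0$ plays its role. With this normalization the Fano scheme is $Fl^X(q-1,q+r)\sqcup Fl^X(q-r,q+1)$, each piece is a connected homogeneous variety, and the two pieces are disjoint since no plane has both forms when $r\geq 2$.

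For the count:
\begin{itemize}
\item If $2<r\leq m$, both types exist, giving exactly two connected components.
\item If $m=q-1$ and $q-1<r\leq\lfloor (N-1)/2\rfloor-q$, type (ii) is empty or degenerate, since $q-r\leq 0$. In the boundary case $r=q$ with $C=0$, I would check whether such planes are still honest $\mathbb{P}^r$'s. If they are, this may need separate treatment or a convention following \cite{Seong20}. Only type (i) then survives, giving one component.
\end{itemize}
To make the disjointness and the connectedness assertions rigorous, I would use the Schubert calculus of \S\ref{sec2}. The class of a line in a type (i) plane and the class of a line in a type (ii) plane coincide, so separating the components instead uses the incidence description via Schubert varieties $\Omega_\lambda$ with the two different flag conditions. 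Concretely, the plane is the Schubert variety $\{\dim(\Sigma\cap A)\geq q-1,\ \Sigma\subset B\}$ in one case and $\{\dim(\Sigma\cap C)\geq q-r,\ \Sigma\subset D\}$ in the other.

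I expect the main obstacle to be the classification step in the orthogonal setting: showing that every linear $\mathbb{P}^r$ in $OG(q;V)$ is one of the two isotropic types. This requires care that the ambient Grassmannian linear spaces are exactly of the two forms, and that the isotropy constraints on all members cut them down precisely to $Fl^X(q-1,q+r)$ and $Fl^X(q-r,q+1)$. In particular, type (i) needs the whole $(r+1)$-dimensional quotient $B/A$ to be isotropic, not merely each line in it; this follows from polarization. The boundary cases in $r$ also need care.
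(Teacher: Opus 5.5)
Your route differs from the paper's. The paper obtains the two possible classes $\sigma_{\lambda^1},\sigma_{\lambda^2}$ of the span $L$ from Pieri's formula. It then cites Hong--Mok rigidity to get one connected family of planes per class, and separates the components by cohomology class. You instead classify the planes directly, using the classical description of linear spaces in $G(q,V)$ together with polarization: every vector of $B$ (resp.\ $D$) lies in some isotropic member $\Sigma$, so the quadratic form vanishes on it. This identifies the space of $r$-planes with $Fl^X(q-1,q+r;V)\sqcup Fl^X(q-r,q+1;V)$. That is more elementary and self-contained, and it shows directly why a single orbit occurs here; in type $C$, by contrast, $B$ need not be isotropic. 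It also gives at once the parametrization the paper only sets up later in Theorem~\ref{thm3.5}. For $2<r\le m$ your argument is complete. Your closing Schubert-calculus paragraph is unnecessary: the two loci are closed, being images of projective bundles over projective flag varieties, and disjoint, since a degree $d\ge 2$ hypersurface spans its plane.

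The gap is the boundary case $r=q$ of the second assertion, and no convention can close it. Your claim that type (ii) is ``empty or degenerate'' when $q-r\le 0$ is false for $r=q$. With $C=0$, the family $\{\Sigma\subset D\}=G(q,D)=\mathbb{P}(\wedge^q D)\subset\mathbb{P}(\wedge^q V)$ is an honest linear $\mathbb{P}^q$. It lies in $OG(q;V)$ whenever $D\in OG(q+1;V)$, and it is not of type (i) since $r=q>2$. Nothing in your analysis actually uses $C\neq 0$.

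So whenever $r=q$ is allowed, i.e.\ $q\ge 3$ and $2q\le\lfloor (N-1)/2\rfloor$, your own method gives two components, over $Fl^X(q-1,2q;V)$ and over $OG(q+1;V)$. For example, $N=13$, $q=r=3$, $d\ge 3$ satisfies every hypothesis of the second assertion ($m=q-1=2<r=3\le 6-3$), yet the Hilbert scheme has two components. The one-component claim therefore holds only for $q+1\le r\le\lfloor (N-1)/2\rfloor-q$, where $\dim C=q-r<0$ genuinely rules out type (ii).

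The paper's proof has the same oversight: it asserts that planes of class $\sigma_{\lambda^2}$ occur only for $r\le q-1$. Resolve your hedge by proving the second assertion for $r\ge q+1$ and recording $r=q$ as an exception, not by trying to prove it as stated.
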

\begin{proof}
By Lemma \ref{lem3.1}, any projective spaces $L$ of dimension $r$ in $\mathbb{P}^\mathcal{N}$ containing a degree $d$ hypersurface $X$ in $\mathbb{P}^r\subset OG(q;V)$ have their cohomology classes $[L]$ in terms of the Schubert basis of $A^*(OG(q;V))$. The condition $r\leq \lfloor (N-1)/2 \rfloor-q$ is necessary here. If $r>\lfloor (N-1)/2 \rfloor-q$, $L$ is no longer of degree $1$ \cite[\S 2.3, \S 3.3]{BKT2009}, which contradicts that $L$ is a projective space.

Using the Pieri's formula \cite[Theorem 2.1, Theorem 3.1]{BKT2009}, the possible cohomology classes for $[L]$ are just either $\sigma_{\lambda^1}$ or $\sigma_{\lambda^2}$ where the partitions $\lambda^i$ for $i=1,2$ are given by
\begin{equation}\label{eqn:L}
\begin{aligned}
\lambda^1&=(N-q-1,N-q-2,\ldots,N-2q+2, N-2q+1,N-2q-r),\;\;\text{and}\\
\lambda^2&=(N-q-1,N-q-2,\ldots, N-2q+r,N-2q+r-2,\ldots, N-2q,N-2q-1)
\end{aligned}
\end{equation}
of length $q$. Applying the Pieri's formula again, the cohomology class $[X]$ of $X$ is given by
 \[
[X]=\begin{cases}
 d\cdot\sigma_{\mu^1}&\text{if }[L]=\sigma_{\lambda^1}\\
d\cdot\sigma_{\mu^2}&\text{if }[L]=\sigma_{\lambda^2}
 \end{cases},
 \]
 where the partitions $\mu^i$ for $i=1,2$ are given by
 \begin{align*}
 \mu^1&=(N-q-1,N-q-2,\ldots,N-2q+2, N-2q+1,N-2q-r+1),\;\;\text{and}\\
 \mu^2&=(N-q-1,N-q-2,\ldots, N-2q+r-1,N-2q+r-3,\ldots, N-2q,N-2q-1).
 \end{align*}
We further note that the corresponding dimension of $L$ is $r$ and its range is $0\leq r\leq N-2q$ if $[L]=\sigma_{\lambda^1}$, and $0\leq r\leq q-1$ if $[L]=\sigma_{\lambda^2}$.

According to \cite[Theorem 1.1]{HM}, projective spaces in the orthogonal Grassmannian with the same cohomology class as $L$ are related via linear automorphisms on the orthogonal Grassmannian. This implies that for each $i=1,2$, the space of $r$-dimensional projective spaces of cohomology class $\sigma_{\lambda^i}$ in $OG(q;V)$ is connected. As the space of all dgree $d$ hypersurfaces in $\mathbb{P}^r$ is also connected \cite{Hart}, the cohomology class $d\cdot\sigma_{\mu^i}$ deduces precisely one connected component for each $i=1,2$.

Given the assumptions \eqref{e:assumption}, we can conclude that if $2<r\leq m$, the Hilbert scheme $\mathrm{Hilb}_{P_{d,r}(r)}(OG(q;V))$ has two distinct connected components. In addition, if $m=q-1$ so that $q-1<r\leq \lfloor (N-1)/{2}\rfloor -q$, then the only possible $[L]$ is $\sigma_{\lambda^1}$. Hence, in this case, $\mathrm{Hilb}_{P_{d,r}(t)}(OG(q;V))$ has only one connected component. So, we get the desired results.
\end{proof}

We remark that when $\lfloor (N-1)/2 \rfloor-q=q-1$, the possible number of connected components in $\mathrm{Hilb}_{P_{d,r}(t)}(OG(q;V))$ is either exactly two or zero without the case that only one connected component exists in $\mathrm{Hilb}_{P_{d,r}(t)}(OG(q;V))$. Additionally, if any elements in each of two connected components have different cohomology classes, then these two compnents are disconnected, \cite[p. 3443]{Seong}. Thus we obtain the following corollary.
\begin{cor}
Suppose $d\geq 3$, $2<r\leq \mathrm{min}\{\lfloor(N-1)/2\rfloor-q,q-1\}$. The Hilbert scheme $\mathrm{Hilb}_{P_{d,r}(t)}(OG(q;V))$ is disconnected.
\end{cor}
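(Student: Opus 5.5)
The plan is to deduce the corollary directly from Theorem \ref{thm3.2} and the remark preceding the statement. Under the hypotheses $d\geq 3$ and $2<r\leq m=\mathrm{min}\{\lfloor(N-1)/2\rfloor-q,q-1\}$, Theorem \ref{thm3.2} says that $\mathrm{Hilb}_{P_{d,r}(t)}(OG(q;V))$ has exactly two connected components. Lemma \ref{lem3.1} identifies each point of the Hilbert scheme with a degree $d$ hypersurface $X$ in a linear space $L\cong\mathbb{P}^r\subset OG(q;V)$. These two components are the loci where $[L]=\sigma_{\lambda^1}$ and where $[L]=\sigma_{\lambda^2}$, equivalently where $[X]=d\cdot\sigma_{\mu^1}$ and where $[X]=d\cdot\sigma_{\mu^2}$. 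So it suffices to check two things: both loci are nonempty, and no connected family joins them.

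\emph{Nonemptiness.} Since $r\leq \lfloor(N-1)/2\rfloor-q\leq N-2q$, the $\sigma_{\lambda^1}$-type $\mathbb{P}^r$ exists. I would construct it by fixing an isotropic $(q-1)$-space $F$ and a linear $\mathbb{P}^r$ inside the isotropic lines of $F^\perp/F$; this quadric has dimension at least $N-2q$, and hence contains linear spaces of dimension up to about $(N-2q)/2$. The hypothesis $r\leq\lfloor (N-1)/2\rfloor-q$ is what guarantees room for such a $\mathbb{P}^r$. Since $r\leq q-1$, the $\sigma_{\lambda^2}$-type $\mathbb{P}^r$ also exists: fix an isotropic $(q+1)$-space $E$ and take $\mathbb{P}^r\subset G(q,E)\cong\mathbb{P}^q$. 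Any degree $d$ hypersurface in either $L$ then gives a point of the corresponding locus.

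\emph{Separation.} The cycle class map is locally constant in flat families, because $\mathrm{Hilb}$ carries a universal flat family and algebraic equivalence preserves classes in $A^*(OG(q;V))$. Hence $[X]$ is constant on each connected component. This is the content of the cited fact from \cite[p. 3443]{Seong}. It then suffices to show $d\sigma_{\mu^1}\neq d\sigma_{\mu^2}$. This holds because $\mu^1\neq\mu^2$ as strict partitions when $r>2$: the last parts are $N-2q-r+1$ versus $N-2q-1$. Schubert classes form a $\mathbb{Z}$-basis of $A^*(OG(q;V))$, so distinct Schubert classes remain distinct after multiplication by $d$.

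The main obstacle I expect is the nonemptiness step, namely verifying carefully that both types of linear $\mathbb{P}^r$ genuinely exist in the stated range. Without this, "two components" could degenerate to one component plus an empty locus, which is exactly the caveat in the preceding remark. If the explicit constructions above are delicate, I would fall back on the dimension ranges recorded in the proof of Theorem \ref{thm3.2}, namely $0\leq r\leq N-2q$ for $\sigma_{\lambda^1}$ and $0\leq r\leq q-1$ for $\sigma_{\lambda^2}$, together with \cite{HM}.
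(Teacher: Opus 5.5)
Your argument is correct and follows the paper's route. The paper reads the corollary off from Theorem \ref{thm3.2} together with the observation cited from Seong: components carrying the distinct classes $d\sigma_{\mu^1}\neq d\sigma_{\mu^2}$ cannot be joined, because the class is constant along connected flat families. Your explicit nonemptiness constructions, and your check that $\mu^1$ and $\mu^2$ differ in their last part exactly when $r>2$, only make explicit what the paper leaves implicit.
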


\begin{rmk}
The arguments used to prove Theorem \ref{thm3.2} do not apply to isotropic Grassmannians, since the analogous projective spaces in the isotropic Grassmannians belong to type C where (co)homological rigidity is known to fail, see \cite[Theorem 1.1]{HM}.
\end{rmk}

 Let $1\leq p_1<p_2\leq \lfloor (N-1)/{2}\rfloor$. We consider the partial flag variety $Fl^X(p_1,p_2;V)$ of type $X=B$ or $D$ parametrizing flags
\[
\{\mathbf{0}\}\subset E_{p_1}\subset E_{p_2}\subset V\cong \mathbb{C}^{N},
\] 
of isotropic subspaces of $V$ where the subscription denote its dimension, i.e., $\mathrm{dim}(E_{p_i})=p_i$.

Let $\mathcal{U}_{p_i}$ denote the tautological isotropic subbundle of rank $p_i$ over $Fl^X(p_1, p_2; V)$ for $i=1,2$. We define the \textit{tautological quotient bundle} $\mathcal{Q}$ as the vector bundle $\mathcal{Q} \coloneqq \mathcal{U}_{p_2}/\mathcal{U}_{p_1}$ of rank $r+1$. Then we have the following identification of the Hilbert scheme $\mathrm{Hilb}_{P_{d,r}(t)}(OG(q;V))$ with the degree $d$ component of the symmetric algebra of the dual of the tautological bundle.

\begin{thm}\label{thm3.5}
Let $d\geq 3$, $2<r\leq \mathrm{min}\{\lfloor(N-1)/2\rfloor-q,q-1\}$. Then $\mathrm{Hilb}_{P_{d,r}(t)}(OG(q;V))$ is isomorphic to $\mathcal{M}=\mathbb{P}(\mathrm{Sym}^d\mathcal{S}^\vee)$. Here $\mathcal{S}$ is the bundle over the disjoint union of flag varieties 
\[
Fl^X(q-1,q+r;V)\sqcup Fl^X(q-r,q+1;V),
\]
defined by \[
\mathcal{S} := 
\begin{cases} 
\mathcal{Q}_1 & \text{on } \mathcal{X}_1=Fl^X(q-1,q+r;V), \\
\mathcal{Q}_2^\vee & \text{on } \mathcal{X}_2=Fl^X(q-r,q+1;V)
\end{cases}
\]
where $\mathcal{Q}_i$ is the tautological quotient bundles on the flag variety $\mathcal{X}_i$ for $i=1,2$.
\end{thm}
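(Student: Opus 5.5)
The plan is to follow the template of Seong's identification for ordinary Grassmannians, working one connected component at a time and using Theorem \ref{thm3.2} together with Lemma \ref{lem3.1}.

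\textbf{Step 1: reduce to linear spaces.} By Lemma \ref{lem3.1}, every point $X\in \mathrm{Hilb}_{P_{d,r}(t)}(OG(q;V))$ spans a unique $r$-plane $L=\langle X\rangle$, and this $L$ lies in $OG(q;V)$. Uniqueness holds because a hypersurface of degree $d\geq 3$ in $\mathbb{P}^r$ spans $\mathbb{P}^r$. This gives a set-theoretic map $X\mapsto L$ from the Hilbert scheme to the Fano scheme $F_r$ of $r$-planes in $OG(q;V)$. Its fiber over $L$ is the projective space $\mathbb{P}(H^0(L,\mathcal{O}_L(d)))=\mathbb{P}(\mathrm{Sym}^d W^\vee)$, where $L=\mathbb{P}(W)$.

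\textbf{Step 2: identify $F_r$ with the two flag varieties.} By the proof of Theorem \ref{thm3.2}, $[L]$ is either $\sigma_{\lambda^1}$ or $\sigma_{\lambda^2}$.
\begin{enumerate}
\item[(i)] In the first case, $L$ consists of all $q$-spaces $\Sigma$ with $E_{q-1}\subset\Sigma\subset E_{q+r}$ for an isotropic flag $E_{q-1}\subset E_{q+r}$. This $L$ is $\mathbb{P}(E_{q+r}/E_{q-1})$, and it lies in $OG$ because every such $\Sigma$ sits inside the isotropic space $E_{q+r}$.
\item[(ii)] In the second case, $L$ consists of the $\Sigma$ with $E_{q-r}\subset \Sigma\subset E_{q+1}$. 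This $L$ is $\mathbb{P}((E_{q+1}/E_{q-r})^\vee)$, the hyperplanes in $E_{q+1}/E_{q-r}$.
\end{enumerate}
In each case I would show the flag is recovered from $L$: $E_{q-1}=\bigcap_{\Sigma\in L}\Sigma$ and $E_{q+r}=\sum_{\Sigma\in L}\Sigma$, and similarly in case (ii). This makes the map from each flag variety to the corresponding component of $F_r$ bijective on points. The rigidity result of \cite{HM} used in Theorem \ref{thm3.2} guarantees that every $r$-plane with the given class is of this form. The bounds $r\le \lfloor (N-1)/2\rfloor-q$ and $r\le q-1$ ensure that $q+r$ stays in the isotropic range and that $q-r\geq 1$.

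\textbf{Step 3: globalize.} Over $\mathcal{X}_1$, the universal plane is $\mathbb{P}(\mathcal{Q}_1)\subset \mathcal{X}_1\times OG(q;V)$; over $\mathcal{X}_2$ it is $\mathbb{P}(\mathcal{Q}_2^\vee)$. Relative degree $d$ hypersurfaces in $\mathbb{P}(\mathcal{S})$ form $\mathcal{M}=\mathbb{P}(\mathrm{Sym}^d\mathcal{S}^\vee)$, and the universal hypersurface is flat over $\mathcal{M}$ with Hilbert polynomial $P_{d,r}$. The universal property then gives a morphism $\Phi:\mathcal{M}\to \mathrm{Hilb}_{P_{d,r}(t)}(OG(q;V))$, which is bijective on closed points by Steps 1 and 2.

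\textbf{Step 4: upgrade $\Phi$ to an isomorphism.} This is where I expect the main obstacle. Bijectivity alone is not enough, so I would construct the inverse morphism directly. Given a flat family $\mathcal{X}\subset S\times OG(q;V)$ over a base $S$, take its linear span in $\mathbb{P}^\mathcal{N}$; by cohomology and base change on $\mathcal{I}_{\mathcal X}(1)$, this is a rank $r+1$ subbundle. Then set
\[
\mathcal{E}_{q-1}=\bigcap \text{ of the tautological } q\text{-planes}, \qquad \mathcal{E}_{q+r}=\text{their sum},
\]
both subbundles of $V\otimes\mathcal{O}_S$. This yields $S\to\mathcal{X}_i$, and the equation of $\mathcal{X}$ gives the lift to $\mathcal{M}$. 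Should this prove hard to check in families, the fallback is to verify that $\Phi$ is injective on tangent spaces, so that it is an isomorphism onto its image, with the image smooth and the Hilbert scheme reduced. Seong's approach in \cite{Seong} does this by comparing the deformation space $H^0(N_{X/OG})$ with the tangent space of $\mathcal{M}$, via the normal bundle sequence for $X\subset L\subset OG(q;V)$.
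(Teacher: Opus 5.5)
Your Steps 1--3 follow essentially the paper's proof. The paper realizes each $\mathbb{P}^r\subset OG(q;V)$ as a fiber $\{U\mid E_{p_1}\subset U\subset E_{p_2}\}$ of the three-step flag variety $Fl^X(p_1,q,p_2;V)\cong\mathbb{P}(\mathcal{S})$ over $\mathcal{X}_1\sqcup\mathcal{X}_2$ (with $p_1=q-1$ or $p_2=q+1$), matches the Schubert conditions for $\sigma_{\lambda^1},\sigma_{\lambda^2}$ to see that every such $\mathbb{P}^r$ arises this way, and then takes relative degree-$d$ hypersurfaces to obtain $\mathbb{P}(\mathrm{Sym}^d\mathcal{S}^\vee)$.

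The paper stops at this fiberwise identification and simply asserts the isomorphism, so your Step 4 goes beyond it. If you carry Step 4 out, the one substantive input you still owe is $h^0(N_{L/OG(q;V)})=\dim\mathcal{X}_i$, i.e.\ that the Fano scheme of $r$-planes is reduced. This is exactly what makes your ``intersection'' and ``sum'' of the tautological planes into subbundles over a non-reduced base.
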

\begin{proof}

We take $1\leq p_1< p_1+2\leq p_2\leq \lfloor(N-1)/2\rfloor$ such that $p_1=q-1$ or $p_2=q+1$. We consider the three-step flag variety 
$
Y:=Fl^X(p_1, q, p_2; V)$ parametrizing 
\[
E_{p_1} \subset E_q \subset E_{p_2}\subset V
\]
where $E_q$ is an isotropic subspace of $V$, $\mathrm{dim}(E_q)=q$. Let $\pi:Y\rightarrow Fl^X(p_1,p_2;V)$ be the projection map that simply drops the middle isotropic subspace $E_q$. 
Then the partial flag variety $Fl^X(p_1,p_2;V)$ parametrizes the family of subvarieties
\begin{equation}\label{eqn3.1}
Z = \pi^{-1}(y) = \{U \mid E_{p_1}\subset U\subset E_{p_2}\} \subset OG(q;V)
\end{equation}
defined as the fibers over points $y=(E_{p_1},E_{p_2})$ in the base $Fl^X(p_1,p_2;V)$.
 We further note that $Z$ is isomorphic to an ordinary Grassmannian of the quotient space $E_{p_2}/E_{p_1}$ so that
\begin{align*}
Z &\cong \{ \bar{U} \in Gr(q-p_1, E_{p_2}/E_{p_1}) \} \\
  &\cong Gr(a, p_2-p_1), 
\end{align*}
viewed as a subvariety inside $OG(q;V)$, where $a=q-p_1$. 
Since $p_1=q-1$ or $p_2=q+1$, the fiber $Z$ is isomorphic to a projective space. Thus, the two-step flag variety parametrizes a family of linear spaces $\mathbb{P}^r$ inside $OG(q;V)$, where $r=p_2-p_1-1>1$. 
In particular, the cohomology class of $Z$ in the Chow ring $A^*(OG(q;V))$ of $OG(q;V)$ is given by
\begin{equation}\label{eqn3.2}
[Z]=\begin{cases}
\sigma_{\lambda^1} & \text{if } p_1=q-1, \\
\sigma_{\lambda^2} & \text{if } p_2=q+1,
\end{cases}
\end{equation}
via the incidence condition in \eqref{eqn3.1} where the partitions $\lambda^i$ for $i=1,2$ are given by
\begin{align*}
\lambda^1&=(N-q-1,N-q-2,\ldots,N-2q+2, \underbrace{N-2q+1}_{p_1^{\text{th}}\text{-position}},N-2q-r),\;\;\text{and}\\
\lambda^2&=(N-q-1,N-q-2,\ldots, \underbrace{N-2q+r}_{p_1^{\text{th}}\text{-position}},N-2q+r-2,\ldots, N-2q,\underbrace{N-2q-1}_{q^{\text{th}}\text{-position}}).
\end{align*}
Indeed, any projective spaces $\mathbb{P}^r$ in $OG(q;V)$ arise in this way. Specifically, the Schubert conditions 
\[
\mathrm{dim}(U_q\cap E_{p_1})\geq p_1\;\text{and }\mathrm{dim}(U_q\cap E_{p_2})\geq q
\]
for $\sigma_{\lambda^i}$ in \eqref{eqn:L} are identical to those specified in \eqref{eqn3.1}, including the partitions $\lambda^i$ for $i=1,2$. We note that for $N$ even, the above conditions imply the closure of the locus where equality holds, see \cite[\S 6]{FP}.

  To describe this structure globally, and treat the two cases uniformly, we define the bundle $\mathcal{S}$ of rank $r+1$ as
\begin{equation}
\mathcal{S} = \begin{cases}
\mathcal{Q}_1 & \text{if } q=p_1+1, \\
\mathcal{Q}_2^\vee & \text{if } q=p_2-1.
\end{cases}
\end{equation}
with the tautological quotient bundles $\mathcal{Q}_i$ for $i=1,2$.
With this definition, the three-step flag variety $Y$ is identified globally as the projectivization of $\mathcal{S}$, yielding the isomorphism $$Y \cong \mathbb{P}(\mathcal{S}).$$
  
  We then construct the bundle $\mathcal{M} \coloneqq \mathbb{P}(\mathrm{Sym}^d\mathcal{S}^\vee)$ over the base $Fl^X(p_1, p_2; V)$, $d\geq 3$. The fiber of $\mathcal{M}$ over a point $y\in Fl^X(p_1, p_2; V)$ parametrizes the set of degree $d$ hypersurfaces inside the fiber $Z \cong \mathbb{P}^r \subset OG(q;V)$.
Consequently the connected component in $\mathrm{Hilb}_{P_{d,r}(t)}(OG(q;V))$ corresponding to the class $d\cdot \sigma_{\mu^i}$ is isomorphic to the bundle $\mathcal{M}\rightarrow Fl^X(p_1, p_2; V)$ for each case, as desired.
 \end{proof}
 
 The following corollary characterizes the case where $\mathrm{Hilb}_{P_{d,r}(t)}(OG(q;V))$ consists of a single connected component:

\begin{cor}
Let $d\geq 3$, $r> 2$. If $q-1< r\leq \lfloor(N-1)/2\rfloor-q$, then 
$\mathrm{Hilb}_{P_{d,r}(t)}(OG(q;V))$ is isomorphic to $\mathcal{M}=\mathbb{P}(\mathrm{Sym}^d\mathcal{Q}_1^\vee)$,
where $\mathcal{Q}_1$ is the tautological quotient bundles on the flag variety $\mathcal{X}_1 =Fl^X(q-1,q+r;V)$.
\end{cor}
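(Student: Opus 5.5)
The plan is to read this corollary off Theorem \ref{thm3.2} and the construction in the proof of Theorem \ref{thm3.5}, keeping only the first of the two families of linear spaces.

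First, apply Theorem \ref{thm3.2} in the regime $m=q-1$ with $q-1<r\leq \lfloor (N-1)/2\rfloor-q$. Its proof shows that every $r$-plane $L\subset OG(q;V)$ containing a degree $d$ hypersurface $X$ (such an $L$ exists and lies in $OG(q;V)$ by Lemma \ref{lem3.1}) has class $\sigma_{\lambda^1}$ or $\sigma_{\lambda^2}$. The class $\sigma_{\lambda^2}$ only occurs when $r\leq q-1$. Concretely, $\sigma_{\lambda^2}$ comes from $Z=\{U\mid E_{p_1}\subset U\subset E_{q+1}\}$ with $p_1=q-r\geq 1$, and for $r>q-1$ this would force $p_1\leq 0$. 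So every point of the Hilbert scheme lies in the single component with $[X]=d\cdot\sigma_{\mu^1}$.

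Second, repeat the proof of Theorem \ref{thm3.5} with $p_1=q-1$ and $p_2=q+r$. Note $p_2\leq \lfloor (N-1)/2\rfloor$ holds exactly by the hypothesis $r\leq \lfloor (N-1)/2\rfloor-q$, so $\mathcal{X}_1=Fl^X(q-1,q+r;V)$ is defined. On $\mathcal{X}_1$, the incidence variety $Y=Fl^X(q-1,q,q+r;V)$ is identified with $\mathbb{P}(\mathcal{Q}_1)$. Its fiber over $(E_{q-1},E_{q+r})$ is $\mathbb{P}(E_{q+r}/E_{q-1})\cong\mathbb{P}^r$, and every $r$-plane of class $\sigma_{\lambda^1}$ arises this way. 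This holds because the Schubert conditions $\dim(U\cap E_{q-1})\geq q-1$ and $U\subset E_{q+r}$ cut out exactly this fiber, and the flag $(E_{q-1},E_{q+r})$ is recovered from $L$: $E_{q-1}$ is the intersection of the subspaces $U\in L$ and $E_{q+r}$ is their span. So the map from $r$-planes to $\mathcal{X}_1$ is bijective. Degree $d$ hypersurfaces in the fiber $\mathbb{P}(\mathcal{Q}_{1,y})$ are parametrized by $\mathbb{P}(\mathrm{Sym}^d\mathcal{Q}_{1,y}^\vee)$. Because $X$ has degree $d\geq 3$, it determines $L$ as its linear span, so the fibers over distinct points of $\mathcal{X}_1$ do not overlap.

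Globally, the relative Hilbert scheme of degree $d$ hypersurfaces in the fibers of $\mathbb{P}(\mathcal{Q}_1)\to\mathcal{X}_1$ is $\mathbb{P}(\mathrm{Sym}^d\mathcal{Q}_1^\vee)$. Its universal family induces a morphism $\mathcal{M}\to \mathrm{Hilb}_{P_{d,r}(t)}(OG(q;V))$, which is bijective on points by the previous step.

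The main obstacle is upgrading this bijection to an isomorphism. I would first try to build the inverse morphism directly: on a flat family of hypersurfaces $X$, the span of $X$ gives a family of $r$-planes, hence a map to $\mathcal{X}_1$ via the recovery above. As a fallback, show that $\mathcal{M}\to\mathrm{Hilb}$ is a bijective morphism onto a normal (in fact smooth) target by comparing tangent spaces, in the same way as \cite[Theorem 3.1]{Seong} and the proof of Theorem \ref{thm3.5}.
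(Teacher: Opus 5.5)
Your route is the same as the paper's, which gives no separate proof: the corollary is the ``in particular'' clause of Theorem~\ref{thm3.2} (only $\sigma_{\lambda^1}$ occurs), combined with the construction in the proof of Theorem~\ref{thm3.5} over $\mathcal{X}_1$. Your concern about upgrading a bijective morphism to an isomorphism is more careful than the paper, which simply asserts the isomorphism.

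However, your step excluding $\sigma_{\lambda^2}$ has a genuine gap at $r=q$. You require $p_1=q-r\geq 1$, but $p_1=0$ is legitimate. Take an isotropic $E_{q+1}$; such subspaces exist because $q+1\leq 2q\leq\lfloor (N-1)/2\rfloor$. Every $q$-dimensional subspace of $E_{q+1}$ is isotropic, so $\{U\mid U\subset E_{q+1}\}=\mathbb{P}(\wedge^qE_{q+1})\cong\mathbb{P}^q$ is a linearly embedded $\mathbb{P}^q$ in $OG(q;V)\subset\mathbb{P}(\wedge^qV)$. It is not of the first type: its members have zero common intersection, whereas all members of a first-type plane contain $E_{q-1}\neq 0$. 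Degree $d$ hypersurfaces in these planes therefore lie in $\mathrm{Hilb}_{P_{d,q}(t)}(OG(q;V))$ but not in the image of $\mathbb{P}(\mathrm{Sym}^d\mathcal{Q}_1^\vee)$. They form a second connected component $\mathbb{P}(\mathrm{Sym}^d\,\mathcal{U}_{q+1})\to OG(q+1;V)$, separated from the first by its cohomology class. For a concrete case, $q=r=3$, $N=13$ satisfies every hypothesis of the statement. The images in $G(3,13)$ of the two kinds of $\mathbb{P}^3$ (from $E_2\subset U\subset E_6$ and from $U\subset E_4$) have the different classes $\sigma_{10,10,7}$ and $\sigma_{9,9,9}$.

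So at $r=q$ the statement itself fails whenever $3\leq q\leq\lfloor (N-1)/2\rfloor-q$. The paper's proof of Theorem~\ref{thm3.2} makes the same off-by-one error when it asserts $0\leq r\leq q-1$ for $[L]=\sigma_{\lambda^2}$; the correct bound is $r\leq q$, with base $OG(q+1;V)$ when $r=q$. Your argument is sound for $q+1\leq r\leq\lfloor (N-1)/2\rfloor-q$, where $p_1=q-r<0$ really is impossible. The corollary should be restricted to that range; equivalently, $m$ in Theorem~\ref{thm3.2} should be $\min\{\lfloor (N-1)/2\rfloor-q,\,q\}$.

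Separately, your fallback for the bijection-to-isomorphism step needs the Hilbert scheme to be smooth, or at least normal, for instance $h^0(N_{X/OG(q;V)})=\dim\mathcal{M}$ at every point. That is a real computation. The paper does not carry it out either; it relies on \cite[Theorem 3.1]{Seong}.
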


\section{The Nef cones of the Hilbert scheme}\label{sec4}
In this section, we explicitly construct the generators of the Nef cone of the Hilbert scheme $\mathrm{Hilb}_{P_{d,r}(t)}(OG(q;V))$ of the orthogonal Grassmannians for $1<q<\lfloor(N-1)/2\rfloor$ and $d\geq 3$, $2<r\leq \mathrm{min}\{\lfloor(N-1)/2\rfloor-q,q-1\}$. We accomplish this by using 1-dimensional families of hypersurfaces dual to the generators of the N\'eron-Severy group.

As stated in Theorem \ref{thm3.5}, $\mathrm{Hilb}_{P_{d,r}(t)}(OG(q;V))$ can be expressed as the bundle $\mathbb{P}(\mathrm{Sym}^d\mathcal{S}^\vee)$ over a disjoint union of two partial flag varieties $Fl^X(q-1,q+r;V)$ and $Fl^X(q-r,q+1;V)$ of type $X=B$ or $D$. We denote by $\mathcal{M}_1$ the projective bundle over the flag variety $Fl^X(q-1,q+r;V)$ and $\mathcal{M}_2$ the projective bundle over the other flag variety $Fl^X(q-r,q+1;V)$. To achieve our goal of this section, we first compute the Nef cones of $\mathcal{M}_1$ and $\mathcal{M}_2$ respectively, as the Nef cone of $\mathrm{Hilb}_{P_{d,r}(t)}(OG(q;V))$ is spanned by these cones $\mathrm{Nef}(\mathcal{M}_1)$ and $\mathrm{Nef}(\mathcal{M}_2)$. For notational simplicity, we write $\mathcal{X}_1$ and $\mathcal{X}_2$ for $Fl^X(q-1,q+r;V)$ and $Fl^X(q-r,q+1;V)$ respectively throughout this section.

As we have seen in the previous section, the Picard number of $\mathcal{M}_i$ is $3$ for $i=1,2$ via Theorem \ref{thm2.4}, and the two generators of the N\'eron-Severi group $NS(\mathcal{M}_i)$ are induced by the generators of $NS(\mathcal{X}_i)$ via Theorem \ref{thm2.3}. In other words, 
given the flag variety $\mathcal{X}_i=Fl^X(p_1^i,p_2^i;V)$ for $i=1,2$ where $(p_1^1,p_2^1)=(q-1,q+r)$ and $(p_1^2,p_2^2)=(q-r,q+1)$, we have projections to orthogonal Grassmannians $\pi_1^i:\mathcal{X}_i\rightarrow OG(p_1^i;V)$ and $\pi_2^i:\mathcal{X}_i\rightarrow OG(p_2^i;V)$.
\[
\begin{tikzcd}
& \mathcal{X}_i \arrow[dl,"\pi_1^i"'] \arrow[dr,"\pi_2^i"] & \\
OG(p_1^i;V)  & & OG(p_2^i;V)
\end{tikzcd}
\]
For each $i\in \{1,2\}$ and fixed complete flags $F^{(1,i)}_\bullet$ and $F^{(2,i)}_\bullet$, we define Schubert varieties $\Omega_1^{(1,i)}\subset OG(p_1^i;V)$ and $\Omega_1^{(2,i)}\subset OG(p_2^i;V)$ to be
\begin{equation}\label{e:Schubert condition1}
\Omega_1^{(1,i)}=\{E_{p_1^i}\mid\mathrm{dim}(E_{p_1^i}\cap F^{(1,i)}_{N-p_1^i})\geq 1\}\subset OG(p_1^i;V)
\end{equation}
and 
\begin{equation}\label{e:Schubert condition2}
\Omega_1^{(2,i)}=\{E_{p_2^i}\mid\mathrm{dim}(E_{p_2^i}\cap F^{(2,i)}_{N-p_2^i})\geq 1\}\subset OG(p_2^i;V)
\end{equation}
of codimensions $1$. Especially, for $N$ even, the special Schubert varieties are meant to be the closure of the locus where equality holds, see \cite[\S 6]{FP}. 
 We let $\sigma_1^{(j,i)}=[\Omega_1^{(j,i)}]$ be the first Schubert classes for $j=1,2$. Then by Theorem \ref{thm2.3}, the N\'eron-severi group of $\mathcal{X}_i$ is generated by $\sigma_1^{(1,i)}$ and $\sigma_1^{(2,i)}$, i.e.,
\[
NS(\mathcal{X}_i)=\mathbb{Z}(\pi_1^i)^*\sigma_1^{(1,i)}\oplus \mathbb{Z}(\pi_2^i)^*\sigma_1^{(2,i)}.
\]

To complete our description of $NS(\mathcal{M}_i)$, we must identify its final generator. The subsequent lemma introduces a variety which will later turn out to represent this remaining generator of the N\'eron-severi group of $\mathcal{M}_i$.

\begin{lemma}\label{lem3.7}
Let $Y_{\widetilde{\lambda^i}}\subseteq OG(q;V)$ be a special Schubert variety associated to $\widetilde{\lambda^i}$ for $i=1,2$, where
\[
\widetilde{\lambda^1}=(r) \;\;\text{and }\;\widetilde{\lambda^2}=(1^r):=(\underbrace{1,\ldots,1}_{r {\text{ times}}}).
\] 
Then the set 
\[
D_{Y_{\widetilde{\lambda^i}}}:=\{X\in \mathcal{M}_i \mid X\cap Y_{\widetilde{\lambda^i}}\neq \emptyset\}
\] 
is a subvariety of codimension $1$ in $\mathcal{M}_i$.
\end{lemma}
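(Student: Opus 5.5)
The plan is to realize $D_{Y_{\widetilde{\lambda^i}}}$ as the image of an incidence correspondence and count dimensions, following Seong's argument for ordinary Grassmannians. First I will identify $Y_{\widetilde{\lambda^i}}$ as a Schubert variety of codimension $r$ in $OG(q;V)$: for $\widetilde{\lambda^1}=(r)$ it is $\{U\mid \dim(U\cap F)\geq 1\}$ for a suitable isotropic (or coisotropic) member $F$ of a fixed flag, and for $\widetilde{\lambda^2}=(1^r)$ it is a locus of the form $\{U\mid \dim(U\cap F')\geq r\}$ with $F'$ coisotropic. In both cases $|\widetilde{\lambda^i}|=r$. By Kleiman transversality for the $SO(N)$-action, a general $\mathbb{P}^r=Z$ of class $\sigma_{\lambda^i}$ (the fibres in \eqref{eqn3.1}) meets $Y_{\widetilde{\lambda^i}}$ in finitely many points, and $\sigma_{\lambda^i}\cdot\sigma_{\widetilde{\lambda^i}}\neq 0$. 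This degree should be $1$: for $i=1$ the Pieri formula of \cite{BKT2009} pairs the class $\sigma_{\lambda^1}$, of the ``line-type'' $\mathbb{P}^r=\{E_{q-1}\subset U\subset E_{q+r}\}$, with $\sigma_r$, and for $i=2$ it pairs $\sigma_{\lambda^2}$, of the dual-type $\mathbb{P}^r$, with $\sigma_{1^r}$. This explains why $\widetilde{\lambda^1}$ and $\widetilde{\lambda^2}$ differ.

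Next I will consider the incidence variety
\[
I_i=\{(X,U)\in \mathcal{M}_i\times Y_{\widetilde{\lambda^i}} \mid U\in X\},
\]
whose projection to $\mathcal{M}_i$ has image exactly $D_{Y_{\widetilde{\lambda^i}}}$; since $I_i$ is closed and $\mathcal{M}_i$ is projective, this image is closed. I will compute $\dim I_i$ by projecting first to the pairs $(y,U)$ with $y\in\mathcal{X}_i$ and $U\in Z_y\cap Y_{\widetilde{\lambda^i}}$. By the previous step the locus of such pairs has dimension $\dim \mathcal{X}_i + r - r=\dim\mathcal{X}_i$, using that $Y\cong\mathbb{P}(\mathcal{S})\to OG(q;V)$ is $SO(N)$-equivariant so that generic transversality applies fibrewise on a dense open subset. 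Over each such pair, the hypersurfaces of degree $d$ in $Z_y\cong\mathbb{P}^r$ passing through the point $U$ form a hyperplane in the fibre $\mathbb{P}(\mathrm{Sym}^d\mathcal{S}_y^\vee)$. Hence
\[
\dim I_i=\dim \mathcal{X}_i+\dim\mathbb{P}(\mathrm{Sym}^d\mathcal{S}^\vee_y)-1=\dim\mathcal{M}_i-1.
\]

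It remains to check that $I_i\to D_{Y_{\widetilde{\lambda^i}}}$ is generically finite. A general $X$ in the image lies in a $Z_y$ meeting $Y_{\widetilde{\lambda^i}}$ in finitely many points, and a general hypersurface through one of these points does not contain the others, so the fibre is finite. Therefore $D_{Y_{\widetilde{\lambda^i}}}$ has codimension exactly $1$. It is also nonempty and proper, since $Z_y\cap Y_{\widetilde{\lambda^i}}\neq\emptyset$ for all $y$, because the intersection number is nonzero, while a general $X$ avoids the finite set $Z_y\cap Y$. Irreducibility, if it is needed, follows from the irreducibility of the main component of $I_i$, which is a projective bundle over an irreducible base.

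I expect the main obstacle to be the first step: justifying that $\dim(Z_y\cap Y_{\widetilde{\lambda^i}})=0$ for general $y$, and that the intersection number is nonzero, in the orthogonal setting. I would try Kleiman's theorem on the homogeneous space $OG(q;V)$ together with the explicit Pieri rule of \cite{BKT2009}, and handle the type D ($N$ even) subtlety by taking closures as in \cite[\S 6]{FP}. The key point to verify is that the Schubert conditions for $\widetilde{\lambda^i}$ cut $Z_y$ linearly: for $i=1$ in a single point of $\mathbb{P}(E_{q+r}/E_{q-1})$, namely where the hyperplane imposed by $F$ is met, and similarly for $i=2$ on the dual projective space.
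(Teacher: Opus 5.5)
Your proposal is correct and is essentially the paper's argument: both are a dimension count that rests on two facts, namely that a general $\mathbb{P}^r$ in the family meets $Y_{\widetilde{\lambda^i}}$ in a single point (the Pieri, or direct linear-algebra, computation you sketch), and that passing through a point is one linear condition on degree $d$ hypersurfaces. The only difference is packaging: your incidence variety $I_i$, fibred by hyperplanes over the preimage of $Y_{\widetilde{\lambda^i}}$ in $\mathbb{P}(\mathcal{S})$, handles the special $\mathbb{P}^r$'s automatically and gives irreducibility, whereas the paper splits $D_{Y_{\widetilde{\lambda^i}}}$ into a general part and a special part lying over a proper closed subset of $\mathcal{X}_i$.
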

\begin{proof}

We first observe that a projective space $L\cong \mathbb{P}^r$ is uniquely determined by any contained degree $d\geq 3$ hypersurface $X$ of dimension $r-1$. Indeed, if there is another projective space $M\neq L$ of dimension $r$ containing $X$, then $X$ must be contained in $L\cap M$ which is a linear subspace of dimension at most $r-1$. However, the hypersurface $X$ of degree greater than $3$ cannot be in any linear subspaces of $L\cap M$. So, we must have $L=M$. 

So, we can define a map 
\[
\phi:\mathcal{M}_i\rightarrow \mathcal{M}_i\times \mathcal{X}_i, \;X\mapsto (X,L_X)
\]
where $L_X$ is the unique projective space of dimension $r$ containing the degree $d\geq 3$ hypersurface $X$, and further, by Lemma \ref{lem3.1}, $L_X$ is contained in $OG(q;V)$. In particular, we identified the flag variety $\mathcal{X}_i$ with the space of fibers $L\cong \pi^{-1}(y)$ that are projective spaces of dimension $r$ for some $y\in \mathcal{X}_i$ as in \eqref{eqn3.1}. Then the map $\phi$ induces an isomorphism between $\mathcal{M}_i$ and its image $\phi(\mathcal{M}_i)$.

We consider the image $\phi(D_{Y_{\widetilde{\lambda^i}}})$ as 
 \begin{equation}\label{eqn3.5}
 \mathcal{Y}=\{(X,L_X)\mid X\cap Y_{\widetilde{\lambda^i}}\neq \emptyset, X\subset L_X\subset OG(q;V) \}\subset \mathcal{M}_i\times \mathcal{X}_i
\end{equation}
Because of this isomorphism, it suffices to calculate the codimension of $\mathcal{Y}$ in $\phi(\mathcal{M}_i)$ rather than the codimension of $D_{Y_{\widetilde{\lambda^i}}}$ in $\mathcal{M}_i$. Since $\phi$ is isomorphism onto its image, the dimension of $\phi(\mathcal{M}_i)$ is the same as the dimension of $\mathcal{M}_i$, i.e., 
\[
\mathrm{dim}(\phi(\mathcal{M}_i))=\binom{r+d}{d}-1.
\] 
It remains for us to determine the dimension of $\mathcal{Y}=\phi(D_{Y_{\widetilde{\lambda^i}}})$.

The Pieri formula \cite[Theorem 2.1]{BKT2009} implies that $L_X$ and $Y_{\widetilde{\lambda^i}}$ intersect in one point in general position. For other special linear subspaces $L_X^0$, we find that the intersection $L_X^0\cap Y_{\widetilde{\lambda^i}}$ is a linear space of dimension at least one.
We therefore consider two cases separately: the general case $L_X$ and the special case $L_X^0$. 

So, we decompose $\mathcal{Y}$ as 
\[
\mathcal{Y}=\mathcal{Y}_{gen}\sqcup\mathcal{Y}_{sp}
\] 
where $\mathcal{Y}_{gen}$ is given by the same conditions as \eqref{eqn3.5} except that $L_X$ are general linear subspaces and $\mathcal{Y}_{sp}$ are the ones where $L_X$ are special linear subspaces $L_X^0$. 

For general subspaces $L_X$, we let $L_X \cap Y_{\widetilde{\lambda^i}}=\{p\}$ for some point $p$. Then on $L_X$, ${\mathcal{Y}}_{gen}$ is isomorphic to the space
\[
\{\widetilde{X} \mid p\in \widetilde{X}\subset L_X\cong \mathbb{P}^r\}\subset \mathcal{M}_i
\]
of dimension
\[
\binom{r+d}{d}-2,
\]
which is cut out by one equation corresponding to the incidence condition $p\in X$. 
In addition the set of special subspaces $L_X^0$ satisfying the condition $\mathrm{dim}(L_X^0\cap  Y_{\widetilde{\lambda^i}})\geq 1$ is of codimension at least $1$ in the flag variety $\mathcal{X}_i$, as it is a proper closed subvariety defined by the upper semicontinuity of fiber dimension for the projection from the incidence variety 
\[
\Sigma= \{ (L_X, E) \mid  E  \in L_X\cap Y_{\widetilde{\lambda^i}} \}\subset \mathcal{X}_i\times OG(q;V)
\]
where $(U,W)\in \mathcal{X}_i$. The codimension of $\mathcal{Y}_{sp}$ in $\phi(\mathcal{M}_i)$ restricted in the special $L_X^0$ has codimension at least $1$. 

Hence, putting all together, we conclude that the codimension of $\mathcal{Y}$ in $\phi(\mathcal{M}_i)$ is $1$, as desired.
\end{proof}

We now claim that a family of curves $D_{Y_{\widetilde{\lambda^i}}}$ yields a divisor class that is independent of the generators of $NS(\mathcal{X}_i)$ induced by $\sigma_1^{(1,i)}$ and $\sigma_1^{(2,i)}$.
We denote by 
\[
\varphi_i: \mathcal{M}_i\rightarrow \mathcal{X}_i
\]
 the projective bundle over $\mathcal{X}_i$, and recall $\pi_j^i:\mathcal{X}_i\rightarrow OG(p_j^i;V)$ the projection for each $j=1,2$. 
\begin{lemma}\label{l:gam}
For each $i\in\{1,2\}$, there exists a $1$-dimensional family $\gamma_i$ of hypersurfaces in $\mathcal{M}_i$ such that
\[
(\pi_j^i \circ \varphi_i)^*\bigl(\sigma_i^{(j,i)}\bigr)\cdot \gamma_i = 0
\quad (j=1,2),
\qquad
\bigl[D_{Y_{\widetilde{\lambda^i}}}\bigr]\cdot \gamma_i = 1.
\]
\end{lemma}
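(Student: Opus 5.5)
The natural candidate for $\gamma_i$ is a curve lying in a single fiber of $\varphi_i:\mathcal{M}_i\rightarrow\mathcal{X}_i$. Any such curve is contracted by $\varphi_i$, so the pull-back classes from $\mathcal{X}_i$ vanish on it by the projection formula. The first two intersection numbers in the statement are then automatically zero. The real content is the choice of the curve inside the fiber so that it meets $D_{Y_{\widetilde{\lambda^i}}}$ exactly once, transversally.

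\textbf{Fixing the fiber.} Fix a general point $y\in\mathcal{X}_i$, and let $L=L_y\cong\mathbb{P}^r\subset OG(q;V)$ be the corresponding fiber $Z$ from \eqref{eqn3.1}. Its class is $\sigma_{\lambda^i}$ by \eqref{eqn3.2}. As in the proof of Lemma \ref{lem3.7}, the Pieri formula of \cite{BKT2009} shows that for general $L$ the intersection $L\cap Y_{\widetilde{\lambda^i}}$ is a single reduced point $p$. This uses that $\widetilde{\lambda^i}$ has codimension $r=\dim L$, and that $\sigma_{\lambda^i}\cdot\sigma_{\widetilde{\lambda^i}}$ is the point class.

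\textbf{The pencil.} The fiber $\varphi_i^{-1}(y)=\mathbb{P}(H^0(L,\mathcal{O}(d)))$ parametrizes degree $d$ hypersurfaces $X\subset L$. Restricted to this fiber, $D_{Y_{\widetilde{\lambda^i}}}$ is the locus $\{X\mid p\in X\}$, which is a hyperplane $H_p$ (evaluation at $p$). Take $\gamma_i$ to be a general line in this projective space, that is, a pencil $\{sF+tG\}$ of degree $d$ forms on $L$ with $F(p)\neq 0$. This $\gamma_i$ is an irreducible rational curve, a $1$-dimensional family of hypersurfaces in $\mathcal{M}_i$. Since $\gamma_i$ is not contained in $H_p$, it meets $H_p$ in exactly one point, namely the member with $sF(p)+tG(p)=0$. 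So $D_{Y_{\widetilde{\lambda^i}}}\cdot\gamma_i=1$, provided the restriction of $\mathcal{O}(D_{Y_{\widetilde{\lambda^i}}})$ to the fiber really is $\mathcal{O}(1)$ and not a multiple of it.

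\textbf{Main obstacle.} The delicate step is justifying this multiplicity-one claim, i.e. that $D_{Y_{\widetilde{\lambda^i}}}$ restricts to the fiber scheme-theoretically as the reduced hyperplane $H_p$. I would argue it as follows. Over the open set $U\subset\mathcal{X}_i$ of general $L$, where $L\cap Y_{\widetilde{\lambda^i}}$ is one reduced point, $D_{Y_{\widetilde{\lambda^i}}}\cap\varphi_i^{-1}(U)$ is the zero locus of a section $ev_{p(y)}$ of $\mathcal{O}_{\mathbb{P}}(1)$, twisted by a line bundle pulled back from the base. On each fiber this evaluation map is a nonzero linear form, so it vanishes to order one. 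The special locus $\mathcal{Y}_{sp}$ lies over the proper closed subset of $\mathcal{X}_i$ described in Lemma \ref{lem3.7}, which a general fiber avoids. Together these give the transversal intersection number $1$.

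Finally, the vanishing $(\pi_j^i\circ\varphi_i)^*\sigma_1^{(j,i)}\cdot\gamma_i=\sigma_1^{(j,i)}\cdot(\pi_j^i\circ\varphi_i)_*\gamma_i=0$ follows by the projection formula, since $\varphi_i(\gamma_i)=\{y\}$ is a point.
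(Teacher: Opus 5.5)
Your proposal is correct and follows essentially the same route as the paper. Both fix a general point of $\mathcal{X}_i$ whose fiber $\mathbb{P}^r$ meets $Y_{\widetilde{\lambda^i}}$ in a single point, and take $\gamma_i$ to be a pencil $\{Z(sf+tg)\}$ of degree-$d$ hypersurfaces in that fiber, which meets $D_{Y_{\widetilde{\lambda^i}}}$ only at the member through that point. The differences are minor: you get the vanishing of the pulled-back classes from the projection formula, since $\varphi_i$ contracts $\gamma_i$, whereas the paper chooses the base point off the Schubert divisors as in \eqref{e:intersection-empty}; and you justify explicitly, via evaluation at the intersection point, that $D_{Y_{\widetilde{\lambda^i}}}$ restricts to a general fiber as a reduced hyperplane, which the paper leaves implicit.
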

\begin{proof}
We fix $i\in \{1,2\}$.

Recall that the two step flag variety $\mathcal{X}_i$ parametrizes a family of projective space $L\cong\mathbb{P}^r$ of dimension $r$ in $OG(q;V)$ such that
for a point $(E_{p_1^i},E_{p_2^i})\in \mathcal{X}_i$, we have 
\[
\{U\in OG(q;V)\mid E_{p_1^i}\subset U\subset E_{p_2^i}\}\cong \mathbb{P}^r.
\]
One can find a pair $(E_{p_1^i},E_{p_2^i})\in \mathcal{X}_i$ via generality such that 
\begin{equation}\label{e:intersection-empty}
E_{p_1^i}\cap F_{N-p_1^i}^{(1,i)}=0,\quad E_{p_2^i}\cap F_{N-p_2^i}^{(2,i)}=0, \quad\text{and}\quad \mathbb{P}(\mathfrak{S}^i)\cap Y_{\widetilde{\lambda^i}}=\{W\}\subset OG(q;V)
\end{equation}
where $W$ is a $q$-dimensional isotropic subspace of $V$ such that $E_{p_1^i}\subset W\subset E_{p_2^i}$ and $W\in Y_{\widetilde{\lambda^i}}$, and 
\[
\mathfrak{S}^i:= 
\begin{cases} 
E_{q+r}/E_{q-1}&\text{if } i=1, \\
\left(E_{q+1}/E_{q-r}\right)^\vee&\text{if } i=2.
\end{cases}
\] 

We use the local coordinates of $\mathbb{P}(\mathfrak{S}^i)$ to view $W$ as a point in $\mathbb{P}^r$. We may therefore fix nonzero homogeneous polynomials $f$ and $g$ of degree $d$ such that $f(W)=0$ and $g(W)\neq 0$. Let $Z(h)$ denote the zero locus of a polynomial $h$. For $[s:t]\in \mathbb{P}^1$, the polynomial $sf+tg$ has degree $d$, and so $Z(sf+tg)$ is a degree $d$ hypersurface in $\mathbb{P}(\mathfrak{S}^i)$. We then define the $1$-dimensional family $\gamma_i$ in $\mathcal{M}_i$ by
\[
\gamma_i = \left\{ Z(sf+tg)  \in \mathcal{M}_i \mid  [s:t] \in \mathbb{P}^1 \right\}.
\]
The construction of $\mathcal{M}_i$ along with the Schubert conditions in $\Omega_1^{(j,i)}$ for $j=1,2$ (see \eqref{e:Schubert condition1}, \eqref{e:Schubert condition2}) and the way of defining $\gamma_i\subset \mathbb{P}(\mathfrak{S}^i)$ with \eqref{e:intersection-empty} yield that
\[
(\pi_j^i\circ \varphi_i)^*\sigma_1^{(j,i)}\cdot\gamma_i=0 \quad\text{for }j=1,2.
\]
Furthermore, $\gamma_i\cap Y_{\widetilde{\lambda^i}}\subset \mathbb{P}(\mathfrak{S}^i)\cap Y_{\widetilde{\lambda^1}}=\{W\}$ that is a single degree $d$ hypersurface in $\mathbb{P}(\mathfrak{S}^i)$, so that $D_{Y_{\widetilde{\lambda^i}}}\cap \gamma_i=\{Z(f)=W\}$. This implies $\bigl[D_{Y_{\widetilde{\lambda^i}}}\bigr]\cdot \gamma_i = 1$.
\end{proof}

By the preceding lemma, the family $\gamma_i$ acts as the dual to the class of the divisor $D_{Y_{\widetilde{\lambda^i}}}$. This implies that the class $\bigl[D_{Y_{\widetilde{\lambda^i}}}\bigr]$ is linearly independent from the classes $(\pi_j^i\circ \varphi_i)^*\sigma_1^{(j,i)}$ for $j=1,2$. Hence, we obtain that the N\'eron-Severi group $NS(\mathcal{M}_i)$ is generated by pull-back of $NS(\mathcal{X}_i)$ and a divisor class associated to $D_{Y_{\widetilde{\lambda^i}}}$.

In the successive lemmas, we specify the irreducible curves needed to present the Nef cone of the Hilbert scheme.

\begin{lemma}\label{lem:gam1}
For each $i\in\{1,2\}$, there exists a $1$-dimensional family $\gamma'_i$ of hypersurfaces in $\mathcal{M}_i$ such that
\[
(\pi_1^i \circ \varphi_i)^*\bigl(\sigma_1^{(1,i)}\bigr)\cdot \gamma'_i = 1,\quad (\pi_2^i \circ \varphi_i)^*\bigl(\sigma_2^{(2,i)}\bigr)\cdot \gamma'_i = 0,\quad\text{and}\quad
\bigl[D_{Y_{\widetilde{\lambda^i}}}\bigr]\cdot \gamma'_i = 0.
\]
\end{lemma}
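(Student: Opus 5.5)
The plan is to build $\gamma'_i$ as a lift to $\mathcal{M}_i$ of a line in the base $\mathcal{X}_i$ that moves only the smaller subspace $E_{p_1^i}$. The subspace $E_{p_2^i}$ stays fixed, and the hypersurface inside the moving $\mathbb{P}^r$ will be held rigid. Throughout I read the second condition as concerning the divisor $(\pi_2^i\circ\varphi_i)^*\sigma_1^{(2,i)}$; the superscript $\sigma_2^{(2,i)}$ is presumably a typo.

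\textbf{Step 1: the base curve.} Fix a general isotropic $E_{p_2^i}$ with $E_{p_2^i}\cap F^{(2,i)}_{N-p_2^i}=0$. Also fix a general $A\subset B\subset E_{p_2^i}$ with $\dim A=p_1^i-1$ and $\dim B=p_1^i+1$. The pencil $\ell=\{E_{p_1^i}\mid A\subset E_{p_1^i}\subset B\}\cong\mathbb{P}^1$ is a line in the fiber of $\pi_2^i$, so $\pi_2^i$ is constant along it and the pullback of $\sigma_1^{(2,i)}$ has degree $0$ on it. Its image under $\pi_1^i$ is a line in $OG(p_1^i;V)$ in the minimal embedding $\iota$ of \eqref{eqn2.1}. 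Since $\sigma_1^{(1,i)}=c_1(\iota^*\mathcal{O}(1))$ by Theorem \ref{thm2.3}, this gives $\sigma_1^{(1,i)}\cdot\pi_1^i(\ell)=1$. Concretely, by \eqref{e:Schubert condition1} the line meets $\Omega_1^{(1,i)}$ exactly once: $B\cap F^{(1,i)}_{N-p_1^i}$ is a single line (this needs $\dim B=p_1^i+1$ and generality), and the unique member meeting it is $A$ plus that line. Transversality holds by genericity.

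\textbf{Step 2: lifting to $\mathcal{M}_i$.} Over $\ell$ the fibers $L_t=\{U\mid E_{p_1^i}(t)\subset U\subset E_{p_2^i}\}\cong\mathbb{P}(\mathfrak{S}^i_t)$ vary. I need a section of $\mathbb{P}(\mathrm{Sym}^d\mathcal{S}^\vee)|_\ell$ whose hypersurfaces avoid $Y_{\widetilde{\lambda^i}}$. To get one, trivialize $\mathcal{S}|_\ell$, which is possible because $\mathcal{S}|_\ell$ is an extension of trivial pieces with a single twist. Then take the family $X_t=Z(f_t)$, where $f_t$ is a fixed polynomial transported along the trivialization, and set $\gamma'_i=\{X_t\}$. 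The projection of this curve to $\mathcal{X}_i$ is $\ell$ with degree one, so the first two intersection numbers follow from Step 1 by the projection formula.

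\textbf{Step 3: the main obstacle, $D_{Y_{\widetilde{\lambda^i}}}\cdot\gamma'_i=0$.} I want the whole union $\bigcup_t X_t$ to be disjoint from $Y_{\widetilde{\lambda^i}}$. By the Pieri count already used in Lemma \ref{lem3.7}, a general $L_t$ meets $Y_{\widetilde{\lambda^i}}$ in one point $W_t$. As $t$ varies, the set $\{W_t\}$ traces a curve $C$. The key is to choose $f_t$ so that $W_t\notin Z(f_t)$ for every $t$. This is a single open condition per $t$ on a $\mathbb{P}^1$-family, which is not automatic. I would therefore first try to translate $Y_{\widetilde{\lambda^i}}$ by a general element of $SO(N)$ so that $C$ is disjoint from the total space $\bigcup_t L_t$, a variety of dimension $r+1$. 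The codimension $r$ of $Y_{\widetilde{\lambda^i}}$ makes a finite intersection expected, however, so this will likely fail.

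The fallback is a genuine choice of $f$. Using the trivialization, the points $W_t$ become a curve of degree $e$ in a fixed $\mathbb{P}^r$. I would pick $f$ whose zero locus avoids this curve; since $d\ge 3$ and the curve is one-dimensional, this looks impossible at first sight, because hypersurfaces meet curves. To repair this, I would instead let $\ell$ be a line along which $Y_{\widetilde{\lambda^i}}\cap L_t$ is empty: choose $B$, and hence every $E_{p_1^i}(t)$, to violate the Schubert condition defining $Y_{\widetilde{\lambda^i}}$, for example by taking $E_{p_2^i}$ transverse to the relevant flag so that no $U$ between $E_{p_1^i}(t)$ and $E_{p_2^i}$ meets it. This forces the intersection number to be $0$. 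The delicate point to verify is that such a $B$ can still be chosen generic enough to keep the transverse single intersection with $\Omega_1^{(1,i)}$ from Step 1.
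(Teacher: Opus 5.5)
\textbf{The gap: Steps 2--3 do not establish $\bigl[D_{Y_{\widetilde{\lambda^i}}}\bigr]\cdot\gamma'_i=0$.}

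Your Step 1 is the paper's base curve $\widetilde{\Sigma^{(1,i)}}$: move $E_{p_1^i}$ in a pencil between your $A\subset B$ and keep $E_{p_2^i}$ fixed. It gives the first two numbers correctly, and you are right that $\sigma_2^{(2,i)}$ should read $\sigma_1^{(2,i)}$. The argument fails after that, for two reasons.

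First, $\mathcal{S}|_\ell$ is not trivial. For $i=1$, write $\widetilde F_{q-2}\subset E_{q-1}(t)\subset \widetilde F_q$ and $E_{q+r}=\widetilde F_{q+r}$. There is an exact sequence $0\to \widetilde F_q/E_{q-1}(t)\to \mathcal{S}|_\ell\to \widetilde F_{q+r}/\widetilde F_q\to 0$ whose sub is $\mathcal{O}_{\mathbb{P}^1}(1)$. Hence $\mathcal{S}|_\ell\cong\mathcal{O}(1)\oplus\mathcal{O}^{r}$, and for $i=2$ one gets $\mathcal{O}(-1)\oplus\mathcal{O}^{r}$. So ``a fixed polynomial transported along the trivialization'' is undefined. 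Moreover, which piece of $\mathcal{S}|_\ell$ the polynomial is built from is exactly what decides the value of $\bigl[D_{Y_{\widetilde{\lambda^i}}}\bigr]\cdot\gamma'_i$.

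Second, your final repair is impossible. Since $[L_t]\cdot[Y_{\widetilde{\lambda^i}}]=1$ for every $t$, no $L_t$ can miss $Y_{\widetilde{\lambda^i}}$. Concretely for $i=1$, with $Y_{\widetilde{\lambda^1}}=\{U : U\cap F\neq 0\}$, a dimension count gives $\dim(\widetilde F_{q+r}\cap F)\geq 1$, so $W_t$ always exists. As it stands, nothing in the proposal proves the third equality.

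\textbf{The missing idea.} Build $X_t$ only from the part of $L_t$ that does not move.

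For $i=1$ this is the paper's argument. Pull a fixed $\Phi$ back along the surjection $\mathcal{Q}_t:\widetilde F_{q+r}/E_{q-1}(t)\twoheadrightarrow \mathcal{H}^1=\widetilde F_{q+r}/\widetilde F_q$. The resulting $X_t$ are cones with fixed vertex $\widetilde F_q$ over a fixed hypersurface of $\mathbb{P}(\mathcal{H}^1)\cong\mathbb{P}^{r-1}$. The curve of points $W_t$ that worried you is then harmless. Indeed $W_t=E_{q-1}(t)+\ell_0$ with $\ell_0=\widetilde F_{q+r}\cap F$ independent of $t$. So every $W_t$ maps to the single point $\bar\ell_0\in\mathbb{P}(\mathcal{H}^1)$, which is nonzero because $\widetilde F_q\notin Y_{\widetilde{\lambda^1}}$. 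Choosing $\Phi$ with $\Phi(\bar\ell_0)\neq 0$ makes every $X_t$ disjoint from $Y_{\widetilde{\lambda^1}}$.

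For $i=2$, the paper's own family runs into exactly your objection. There $\mathcal{Q}^2_p$ is an inclusion, so the $X_p$ are hyperplane sections $Z(\Phi)\cap\mathbb{P}(\mathcal{S}^2_p)$ of a fixed hypersurface in $\mathbb{P}(\mathcal{H}^2)\cong\mathbb{P}^{r+1}$. Every hyperplane of $\mathbb{P}(\mathcal{H}^2)$ is some $L_y$, so it meets $Y_{\widetilde{\lambda^2}}$. Hence $Y_{\widetilde{\lambda^2}}\cap\mathbb{P}(\mathcal{H}^2)$ has positive dimension, $Z(\Phi)$ meets it, and $\bigl[D_{Y_{\widetilde{\lambda^2}}}\bigr]\cdot\gamma'_2$ is positive (in fact it equals $d$).

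A family that does work for $i=2$ is the constant non-reduced one $X_p=d\cdot\Lambda$, where $\Lambda=\{U : \widetilde F_{q-r+1}\subset U\subset \widetilde F_{q+1}\}\cong\mathbb{P}^{r-1}$. This $\Lambda$ is a hyperplane in every $L_p$. Since $\dim\Lambda=r-1<\operatorname{codim}Y_{\widetilde{\lambda^2}}$, Kleiman transversality shows $\Lambda$ misses a general translate of $Y_{\widetilde{\lambda^2}}$, giving intersection $0$.
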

\begin{proof}
We begin with arguing uniformly in $i\in\{1,2\}$. Given fixed complete isotropic flags $F^{(j,i)}_\bullet$ for $j=1,2$, we first consider Schubert divisors
 \begin{align*}
 \Sigma^{(1,i)}&=(\pi_1^i)^{-1}(\Omega_1^{(1,i)})=\{E_{p_1^i}\subset E_{p_2^i}\mid \mathrm{dim}(E_{p_1^i}\cap F_{N-p_1^i}^{(1,i)})\geq 1\},\\
 \Sigma^{(2,i)}&=(\pi_2^i)^{-1}(\Omega_1^{(1,i)})=\{E_{p_1^i}\subset E_{p_2^i}\mid \mathrm{dim}(E_{p_2^i}\cap F_{N-p_2^i}^{(2,i)})\geq 1\}
 \end{align*}
of codimension $1$ in $\mathcal{X}_i$.  Here when $N$ is even, the above divisors should be read as the closure of the locus where equality holds, see \cite[\S 6]{FP}.

One can find a quadruple $(\widetilde{F}_{p_1^i}^{(2,i)},\widetilde{F}^{(1,i)}_{q^i},\widetilde{F}^{(2,i)}_{q^i},\widetilde{F}_{p_2^i}^{(1,i)})$ of isotropic spaces by generality, satisfying
\begin{equation}\label{e:vanishing}
\widetilde{F}_{p_1^i}^{(2,i)}\cap F_{N-p_1^i}^{(1,i)}=0, \widetilde{F}_{p_2^i}^{(1,i)}\cap F_{N-p_2^i}^{(2,i)}=0\text{ and } \widetilde{F}^{(j,i)}_{q^i}\notin Y_{\widetilde{\lambda^i}},
\end{equation}
for $j=1,2$ where $q=q^1=p_1^1+1= q^2=p_2^2-1$. Here the subscripts indicate their dimension, dim$(\widetilde{F}^{(j,i)}_k)=k$.  Then we take complete (or partial) flags $\widetilde{F}^{(j,i)}_\bullet$ of the isotropic subspaces of $V$ satisfying the conditions \eqref{e:vanishing}.
With the fixed flags $\widetilde{F}^{(j,i)}_\bullet$, let $\widetilde{\Sigma^{(1,i)}}$ denote the dual Schubert curve of $\Sigma^{(1,i)}$ given by 
\[
\widetilde{\Sigma^{(1,i)}}=\{\widetilde{E}_{p_1^i}\subset \widetilde{E}_{p_2^i}\mid \widetilde{F}^{(1,i)}_{p_1^i-1}\subset \widetilde{E}_{p_1^i}^1\subset \widetilde{F}^{(1,i)}_{p_1^i+1}, \;\widetilde{E}_{p_2^i}=\widetilde{F}^{(1,i)}_{p_2^i}\text{ fixed}\}\subset \mathcal{X}_i
\]
and $\widetilde{\Sigma^{(2,i)}}$ the dual Schubert curve of $\Sigma^{(2,i)}$ by 
\[
\widetilde{\Sigma^{(2,i)}}=\{\widetilde{E}_{p_1^i}\subset \widetilde{E}_{p_2^i}\mid \widetilde{F}^{(2,i)}_{p_2^i-1}\subset \widetilde{E}_{p_2^i}\subset \widetilde{F}^{(2,i)}_{p_2^i+1}, \;\widetilde{E}_{p_1^i}=\widetilde{F}^{(2,i)}_{p_1^i}\text{ fixed}\}\subset \mathcal{X}_i.
\]
 These Schubert curves are isomorphic to $\mathbb{P}^1$.

We construct the $1$-dimensional family $\gamma_i'$ of hypersurfaces in $\mathcal{M}_i$, as follows.
We define $\mathcal{S}_p^i$ for $p=(\widetilde{E}_{p_1^i},\widetilde{E}_{p_2^i})\in \widetilde{\Sigma^{(1,i)}}$ to be 
\begin{equation}\label{e:quotient}
\mathcal{S}_p^i := 
\begin{cases} 
\widetilde{E}_{p_2^i}/\widetilde{E}_{p_1^i}&\text{if } i=1, \\
\left(\widetilde{E}_{p_2^i}/\widetilde{E}_{p_1^i}\right)^\vee&\text{if } i=2.
\end{cases}
\end{equation}
For any point $p\in \widetilde{\Sigma^{(1,i)}}$, there is a natural surjective map
\[
\mathcal{Q}^1_{p}:\mathcal{S}_p^1=\widetilde{F}^{(1,1)}_{q+r}/\widetilde{E}_{q-1}\twoheadrightarrow \widetilde{F}^{(1,1)}_{q+r}/\widetilde{F}^{(1,1)}_{q}=: \mathcal{H}^1
\]
for $i=1$. Similarly, for $i=2$, we have a map
\[
\mathcal{Q}^2_{p}:\mathcal{S}_p^2=\left(\widetilde{F}^{(1,2)}_{q+1}/\widetilde{E}_{q-r}\right)^\vee\hookrightarrow \left(\widetilde{F}^{(1,2)}_{q+1}/\widetilde{F}^{(1,2)}_{q-r-1}\right)^\vee=:\mathcal{H}^2
\] from the exact sequence
\[
0\rightarrow  \widetilde{E}_{q-r}/\widetilde{F}^{(1,2)}_{q-r-1}\rightarrow \widetilde{F}^{(1,2)}_{q+1}/\widetilde{F}^{(1,2)}_{q-r-1}\rightarrow \widetilde{F}^{(1,2)}_{q+1}/\widetilde{E}_{q-r}\rightarrow 0.
\]
 Then these induce a map
\[
\hat{\mathcal{Q}}^i_p:\mathbb{P}\left(\mathrm{Sym}^d\left(\mathcal{H}^i\right)^\vee\right)\rightarrow \mathbb{P}\left(\mathrm{Sym}^d\left(\mathcal{S}_p^i\right)^\vee\right).
\] We note that this map is injective for $i=1$ and surjective for $i=2$.
We choose a general family of homogeneous polynomials $\Phi^i_p$ of degree $d$ for $p \in \widetilde{\Sigma^{(1,i)}}$ in $\mathbb{P}(\mathrm{Sym}^d(\mathcal{H}^i)^\vee)$. 
Then for each $p \in \widetilde{\Sigma^{(1,i)}}$, we define a hypersurface 
\[
X_{p} := Z(\Phi^i_p \circ \mathcal{Q}^i_p) 
\subset \mathbb{P}(\mathcal{S}_p^i).
\]
of degree $d$. This construction gives a morphism 
\begin{equation}\label{e:gamma1}
\widetilde{\Sigma^{(1,i)}} \longrightarrow \mathcal{M}_i, 
\quad p \longmapsto X_p.
\end{equation}
Since the domain of $\mathcal{Q}_p^i$ and the kernel of $\mathcal{Q}^1_p$, 
\[
\mathrm{ker}(\mathcal{Q}^1_p) = \widetilde{F}^{(1,1)}_{q}/\widetilde{E}_{q-1},
\]
vary with $p \in \widetilde{\Sigma^{(1,i)}}$, the point $\Phi^i_p \circ \mathcal Q^i_p$ in $\mathbb P(\mathrm{Sym}^d(\mathcal{S}_p^i)^\vee)$ varies with $p$. Hence, the morphism \eqref{e:gamma1} is non-constant, and its image
\[
\gamma'_i := \{\, X_{p} = Z(\Phi^i_p \circ \mathcal Q^i_{p}) \mid p \in \widetilde{\Sigma^{(1,i)}} \,\}
\subset \mathcal{M}_i
\]
is an irreducible curve. In particular, $\gamma'_i$ is generically isomorphic to $\widetilde{\Sigma^{(1,i)}} \cong \mathbb{P}^1$. It is worthwhile to note that by construction, $\varphi_i(X_p)=p$ for all $p\in \widetilde{\Sigma^{(1,i)}}$. 

With the family $\gamma'_i$, we have that
 $\gamma'_i\cdot (\pi_1^i\circ \varphi_i)^*\sigma_1^{(1,i)}=1$, as $\Sigma^{(1,i)}$ intersects with $\widetilde{\Sigma^{(1,i)}}$ at a single point, while $\gamma'_i\cdot (\pi_2^i\circ \varphi_i)^*\sigma_1^{(2,i)}$ vanishes by the conditions \eqref{e:vanishing}. Specifically, by \cite[Example 2.4.3]{Ful}, we obtain
\begin{align*}
\gamma'_i\cdot (\pi_j^i\circ \varphi_i)^*\sigma_1^{(j,i)}&=(\varphi_i)_*\gamma'_i\cdot (\pi_j^i)^*\sigma_1^{(j,i)}=\delta_{1j}
\end{align*}
where $\delta_{1j}=1$ if $j=1$ and $0$ if $j=2$.

Now we claim that $\gamma_i'\cdot [D_{Y_{\widetilde{\lambda^i}}}]$ vanishes. 

To compute $\gamma'_i \cdot [D_{Y_{\widetilde{\lambda^i}}}]$, we must determine if the moving point $\{W^i_p\} = \mathbb{P}(\mathcal{S}_p^i) \cap Y_{\widetilde{\lambda^i}}$ which varies depending on $\widetilde{E}_{p_1^i}$ is ever contained in the hypersurface $X_p$ for $p\in\widetilde{\Sigma^{(1,i)}}$. We remark that by the condition in \eqref{e:vanishing}, $W^1_p\notin \mathrm{Ker}(\mathcal{Q}_p^1).$ So, treating the point $W^i_p$ as its underlying line in $\mathcal{S}_p^i$, its image $\mathcal{Q}^i_p(W^i_p)$ is a single, constant $1$-dimensional subspace of $\mathcal{H}^i$. Since the image is a point in $\mathbb{P}(\mathcal{H}^i)$, a general choice of $\Phi^i_p$ ensures that it does not vanish at this point, implying $\Phi^i_p(\mathcal{Q}^i_p(W^i_p))\neq 0$. Consequently, the hypersurface $X_p$ does not contain the point $W^i_p$. It follows that the curve $\gamma'_i$ is disjoint from the divisor $Y_{\widetilde{\lambda^i}}$. 
\end{proof}

While the proof of Lemma \ref{lem:gam1} is inspired by the proof sketch provided by \cite[p.7]{Seong} for the ordinary Grassmannian case, we have supplied the explicit and concrete details required to complete the argument for orthogonal cases.

\begin{lemma}\label{lem:gam2}
For each $i\in\{1,2\}$, there exists a $1$-dimensional family $\gamma''_i$ of hypersurfaces in $\mathcal{M}_i$ such that
\[
(\pi_1^i \circ \varphi_i)^*\bigl(\sigma_1^{(1,i)}\bigr)\cdot \gamma''_i = 0,\quad (\pi_2^i \circ \varphi_i)^*\bigl(\sigma_2^{(2,i)}\bigr)\cdot \gamma''_i = 1,\quad\text{and}\quad
\bigl[D_{Y_{\widetilde{\lambda^i}}}\bigr]\cdot \gamma''_i = 0.
\]
\end{lemma}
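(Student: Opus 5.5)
The plan mirrors the proof of Lemma \ref{lem:gam1}, with the roles of the two steps of the flag interchanged. I read the second class in the statement, $\sigma_2^{(2,i)}$, as the divisor class $\sigma_1^{(2,i)}$, matching Lemma \ref{lem:gam1}.

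\textbf{Base curve.} Fix the flags $\widetilde{F}^{(2,i)}_\bullet$ chosen generally so that \eqref{e:vanishing} holds. Take as base curve the dual Schubert curve
\[
\widetilde{\Sigma^{(2,i)}}=\{\widetilde{E}_{p_1^i}\subset\widetilde{E}_{p_2^i}\mid \widetilde{F}^{(2,i)}_{p_2^i-1}\subset\widetilde{E}_{p_2^i}\subset\widetilde{F}^{(2,i)}_{p_2^i+1},\ \widetilde{E}_{p_1^i}=\widetilde{F}^{(2,i)}_{p_1^i}\}\cong\mathbb{P}^1 .
\]
Along this curve the small space is fixed, so it meets $\Sigma^{(1,i)}$ trivially by the first condition in \eqref{e:vanishing}. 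The large space moves in a pencil, so the curve meets the Schubert divisor $\Sigma^{(2,i)}$ transversally in one point.

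\textbf{Lifting to $\mathcal{M}_i$.} For $p\in\widetilde{\Sigma^{(2,i)}}$, put $\mathcal{S}^i_p$ as in \eqref{e:quotient}. I then need a fixed space $\mathcal{H}^i$ related functorially to every $\mathcal{S}^i_p$.

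For $i=1$, the fibre $\widetilde{E}_{q+r}/\widetilde{F}_{q-1}$ varies inside $\widetilde{F}_{q+r+1}/\widetilde{F}_{q-1}$. It always contains the fixed subspace $\mathcal{H}^1:=\widetilde{F}_{q+r-1}/\widetilde{F}_{q-1}$ of dimension $r$.

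For $i=2$, the dual picture arises: $(\widetilde{E}_{q+1}/\widetilde{F}_{q-r})^\vee$ surjects onto the fixed space $\mathcal{H}^2:=(\widetilde{F}_{q}/\widetilde{F}_{q-r})^\vee$. This is the mirror of Lemma \ref{lem:gam1}, where $i=1$ used a quotient and $i=2$ an inclusion.

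\textbf{Hypersurfaces.} To get hypersurfaces I need a linear map $\mathcal{Q}^i_p:\mathcal{S}^i_p\to\mathcal{K}^i$ with fixed target. For $i=1$, choose a projection $\widetilde{F}_{q+r+1}/\widetilde{F}_{q-1}\to\mathcal{K}^1$ onto a fixed $(r+1)$-dimensional quotient whose kernel is a general line $\ell$ not lying in any $\mathcal{S}^1_p$; restricting it to $\mathcal{S}^1_p$ gives an isomorphism for every $p$. For $i=2$, dualize the analogous choice. Pick a single general degree $d$ form $\Phi^i$ on $\mathcal{K}^i$ and set
\[
X_p:=Z(\Phi^i\circ\mathcal{Q}^i_p)\subset\mathbb{P}(\mathcal{S}^i_p).
\]
The map $p\mapsto X_p$ is a section of $\varphi_i$ over $\widetilde{\Sigma^{(2,i)}}$, so it is a non-constant morphism $\mathbb{P}^1\to\mathcal{M}_i$. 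Its image $\gamma''_i$ is irreducible with $(\varphi_i)_*\gamma''_i=[\widetilde{\Sigma^{(2,i)}}]$. By the projection formula \cite[Example 2.4.3]{Ful},
\[
\gamma''_i\cdot(\pi^i_j\circ\varphi_i)^*\sigma_1^{(j,i)}=\delta_{2j}.
\]

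\textbf{Vanishing against $D_{Y_{\widetilde{\lambda^i}}}$.} I must show $X_p\not\ni W^i_p$ for every $p$, where $\{W^i_p\}=\mathbb{P}(\mathcal{S}^i_p)\cap Y_{\widetilde{\lambda^i}}$. The points $\mathcal{Q}^i_p(W^i_p)$ trace out at most a curve $C\subset\mathbb{P}(\mathcal{K}^i)$. A general form $\Phi^i$ of degree $d$ does not vanish identically on $C$, but it may vanish at finitely many points of $C$. Ruling this out is the main obstacle.

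The route I would try first is to arrange, using the condition $\widetilde{F}^{(2,i)}_{q}\notin Y_{\widetilde{\lambda^i}}$ from \eqref{e:vanishing}, that $W^i_p$ stays constant along the pencil. Concretely, I would check with the Schubert conditions defining $Y_{\widetilde{\lambda^i}}$ that $W^i_p$ lies in the fixed part $\mathcal{H}^i$ for all $p$. Then $C$ is a single point, and a general $\Phi^i$ avoids it, giving $\gamma''_i\cap D_{Y_{\widetilde{\lambda^i}}}=\emptyset$.

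If that fails, I would fall back on a different argument. Let $\Phi^i$ vary with $p$ along $\mathbb{P}^1$, and compare the resulting degree with the class of $D_{Y_{\widetilde{\lambda^i}}}$ restricted to $\varphi_i^{-1}(\widetilde{\Sigma^{(2,i)}})$. I would then choose the twisting so that the intersection number is $0$, since the statement only needs intersection number zero, not disjointness.
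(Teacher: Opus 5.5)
Your base curve and the projection-formula computation of the two Schubert numbers are fine, and reading $\sigma_2^{(2,i)}$ as $\sigma_1^{(2,i)}$ is correct. The gap is exactly the step you flag, and for $i=1$ your primary route cannot close it.

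Write $C=\widetilde{\Sigma^{(2,1)}}$ and $\widetilde F_k=\widetilde F^{(2,1)}_k$, and let $A_\bullet$ be the isotropic flag defining $Y_{\widetilde{\lambda^1}}=\{\Sigma:\Sigma\cap A_{q+r-1}^{\perp}\neq 0\}$. Then $W_p=\widetilde F_{q-1}+(\widetilde E_{q+r}\cap A^{\perp}_{q+r-1})$. For general flags, $K:=\widetilde F_{q+r+1}\cap A^{\perp}_{q+r-1}$ is $2$-dimensional with $K\cap \widetilde F_{q+r-1}=0$. Hence $\widetilde E_{q+r}\mapsto \widetilde E_{q+r}\cap K$ identifies $C$ with $\mathbb{P}(K)$. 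So the points $W_p$ sweep out a line, and the line subbundle $W\subset\mathcal S^1|_C$ they span is $\mathcal O(-1)$.

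On $\varphi_1^{-1}(C)$, the divisor $D_{Y_{\widetilde{\lambda^1}}}$ is the zero locus of $F\mapsto F(w_p)$, a section of $\mathcal O_{\mathcal M_1}(1)\otimes\varphi_1^*W^{-d}$. Consider any curve $Z(\Phi\circ\mathcal Q_p)$ with one general fixed $\Phi$ and $\mathcal Q_p$ the restriction of a fixed linear map. This covers your construction, and the paper's as well. Such a curve is a trivial line subbundle of $\mathrm{Sym}^d\mathcal S^{1\vee}|_C$, so $\mathcal O_{\mathcal M_1}(1)$ has degree $0$ on it and $\bigl[D_{Y_{\widetilde{\lambda^1}}}\bigr]\cdot\gamma''_1=d$, not $0$. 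Concretely, $\Phi$ vanishes at $d$ points of the image of $\mathbb{P}(K)$.

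This number depends only on numerical classes, so no special choice of flags rescues it. Indeed, making $W_p$ constant forces $W_p\subset\widetilde F_{q+r-1}$. But $\widetilde F_{q+r+1}\cap A^\perp_{q+r-1}$ always has dimension $\geq 2$, so it then contains a vector outside $\widetilde F_{q+r-1}$. At the corresponding point $p_0$ of the pencil, $\mathbb{P}(\mathcal S^1_{p_0})\cap Y_{\widetilde{\lambda^1}}$ contains a line, which every hypersurface meets.

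Separately, your isomorphism claim is false. The $\mathcal S^1_p$ are all the hyperplanes through a fixed codimension-$2$ subspace, so every line $\ell$ lies in one of them. Equivalently, $\mathcal S^1|_C\cong\mathcal O^{r}\oplus\mathcal O(-1)$ and $\mathcal S^2|_C\cong\mathcal O^{r}\oplus\mathcal O(1)$ are not trivial.

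Your fallback is the right idea, and this splitting tells you exactly what to take. $\mathcal S^{1\vee}|_C$ contains $\mathcal O(1)$, spanned by the linear form $\ell_p$ on $\mathcal S^1_p$ vanishing on $\widetilde F_{q+r-1}/\widetilde F_{q-1}$. So set $X_p=Z(\ell_p^d)$, the $d$-fold hyperplane $\mathbb{P}(\widetilde F_{q+r-1}/\widetilde F_{q-1})\subset\mathbb{P}(\mathcal S^1_p)$. This is a section of $\varphi_1$ over $C$, so the Schubert numbers are $\delta_{2j}$. It is non-constant in the Hilbert scheme because the ambient $\mathbb{P}^r$ moves. It misses every $W_p$ because $K\cap\widetilde F_{q+r-1}=0$. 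Hence $\gamma''_1\cap D_{Y_{\widetilde{\lambda^1}}}=\emptyset$; numerically, $\deg\mathcal O_{\mathcal M_1}(1)|_{\gamma''_1}=-d$ cancels $\deg W^{-d}=d$.

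For $i=2$ your first route does work, provided you use the surjection $\mathcal S^2_p\twoheadrightarrow\mathcal H^2=(\widetilde F_q/\widetilde F_{q-r})^\vee$ that you wrote down, with now $\widetilde F_k=\widetilde F^{(2,2)}_k$, rather than an isomorphism. With $Y_{\widetilde{\lambda^2}}=\{\Sigma:\dim(\Sigma\cap A^\perp_{q-r+1})\geq r\}$ one gets $W_p\cap\widetilde F_q=\widetilde F_{q-r}+(\widetilde F_q\cap A^\perp_{q-r+1})$, which is independent of $p$. So the image of $W_p$ in $\mathbb{P}(\mathcal H^2)$ is one fixed point, well defined since $\widetilde F_q\notin Y_{\widetilde{\lambda^2}}$, and a general $\Phi$ avoids it.

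That is the paper's argument for $i=2$. For $i=1$ the paper uses the inclusion $\mathcal S^1_v\hookrightarrow\widetilde F_{q+r+1}/\widetilde F_{q-1}$ and asserts the same constancy, which fails as above. So it does not supply this step either.
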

\begin{proof}
Let $i\in \{1,2\}$. The proof is almost identical to that of Lemma \ref{lem:gam1}, except for the definition of $\gamma_i''$. More precisely, the Schubert divisors $\Sigma^{(j,i)}$ in $\mathcal{X}_i$ and the dual Schubert curves $\widetilde{\Sigma^{(j,i)}}$ ($j=1,2$), together with the quadruple $(\widetilde{F}_{p_1^i}^{(2,i)},\widetilde{F}_{q^i}^{(1,i)},\widetilde{F}_{q^i}^{(2,i)},\widetilde{F}_{p_2^i}^{(1,i)})$, are defined as in the proof of Lemma \ref{lem:gam1}.

To begin defining the $1$-dimensional family $\gamma''_i$ of hypersurfaces in $\mathcal{M}_i$, we let $\mathcal{S}_v^i$ be given by the expression in \eqref{e:quotient}, using $v=(\widetilde{E}_{p_1^i}, \widetilde{E}_{p_2^i}) \in \widetilde{\Sigma^{(2,i)}}$ in place of $p \in \widetilde{\Sigma^{(1,i)}}$. For each $v=(\widetilde{E}_{p_1^i}, \widetilde{E}_{p_2^i}) \in \widetilde{\Sigma^{(2,i)}}$, We define an injective map 
\[
\mathcal{Q}^1_{v}:\mathcal{S}_v^1=\widetilde{E}_{q+r}/\widetilde{F}^{(2,1)}_{q-1}\hookrightarrow \widetilde{F}^{(2,1)}_{q+r+1}/\widetilde{F}^{(2,1)}_{q-1}=: \mathcal{H}^1
\]
as the case for $i=1$. For $i=2$, we consider the surjective map
\[
\mathcal{Q}^2_{v }:\mathcal{S}_v^2=\left(\widetilde{E}_{q+1}/\widetilde{F}^{(2,2)}_{q-r}\right)^\vee\twoheadrightarrow\left(\widetilde{F}^{(2,2)}_{q}/\widetilde{F}^{(2,2)}_{q-r}\right)^\vee =:\mathcal{H}^2.
\] 
Then we have the induced map
\[
\hat{\mathcal{Q}}^i_v:\mathbb{P}\left(\mathrm{Sym}^d\left(\mathcal{H}^i\right)^\vee\right)\rightarrow \mathbb{P}\left(\mathrm{Sym}^d\left(\mathcal{S}_v^i\right)^\vee\right).
\] 
We take a general family of $\Phi_v^i$, homogeneous polynomials of degree $d$, for $v\in \widetilde{\Sigma^{(2,i)}}$ in $\mathbb P(\mathrm{Sym}^d(\mathcal{S}_v^i)^\vee)$, and define a hypersurfeace $X_v:=Z(\Phi_v^i\circ\mathcal{Q}_v^i)$ in $\mathbb{P}(\mathcal{S}^i_v)$ of degree $d$. In this way we obtain a morphism 
\begin{equation}\label{e:gamma2}
\widetilde{\Sigma^{(2,i)}} \longrightarrow \mathcal{M}_i, 
\quad v \longmapsto X_v.
\end{equation}

From the exact sequence
\[
0\rightarrow \widetilde{F}^{(2,2)}_{q}/\widetilde{F}^{(2,2)}_{q-r} \rightarrow \widetilde{E}_{q+1}/\widetilde{F}^{(2,2)}_{q-r}\rightarrow \widetilde{E}_{q+1}/ \widetilde{F}^{(2,2)}_{q}\rightarrow 0,
\]
we get the kernel
\[
\mathrm{ker}(\mathcal{Q}_v^2)=\left(\widetilde{E}_{q+1}/ \widetilde{F}^{(2,2)}_{q}\right)^\vee
\]
which varies with $v=(\widetilde{F}^{(2,2)}_{q-r},\widetilde{E}_{q+1})\in \widetilde{\Sigma^{(2,2)}}$. Moreover, the domains of $\mathcal{Q}_v^i$ depends on $v$, and so does $\Phi_v^i\circ\mathcal{Q}_v^i$. It follows that the morphism \eqref{e:gamma2} is not constant. We denote its image by
\[
\gamma''_i := \{\, X_{v} = Z(\Phi^i_v \circ \mathcal Q^i_{v}) \mid q\in \widetilde{\Sigma^{(2,i)}} \,\}
\subset \mathcal{M}_i.
\]
Then $\gamma''_i$ is an irreducible curve, as the image of $\mathbb{P}^1$ under a non-constant morphism. 

The fact that $\Sigma^{(2,i)}$ and $\widetilde{\Sigma^{(2,i)}}$ meets at a single point results in 
\[
(\pi_2^i \circ \varphi_i)^*\bigl(\sigma_2^{(2,i)}\bigr)\cdot \gamma''_i = 1
\] 
and the condition \eqref{e:vanishing} leads to 
\[
(\pi_1^i \circ \varphi_i)^*\bigl(\sigma_1^{(1,i)}\bigr)\cdot \gamma''_i = 0.
\]
Furthermore, the point $\{W^2_v\}=\mathbb{P}(\mathcal{S}_v^2)\cap Y_{\widetilde{\lambda^2}}$ does not belong to the kernel $\mathrm{Ker}(\mathcal{Q}_v^2)$, since $W^2_v$ and $\widetilde{F}_q^{(2,2)}$ are distinct by the condition \eqref{e:vanishing}. The rest of the argument follows line by line as in the proof of Lemma \ref{lem:gam1}.
\end{proof}

With the necessary groundwork established, we are now in a position to state the generators of the Nef cone of $\mathcal{M}_i$ for $i=1,2$. 

\begin{thm}\label{t:generators}
Let us fix $i\in\{1,2\}$. The Nef cone $Nef(\mathcal{M}_i)$ is spanned by the classes 
\[
(\pi_j^i\circ \varphi_i)^*\sigma_1^{(j,i)}\quad\text{ for }j=1,2\quad\text{ and }\bigl[D_{Y_{\widetilde{\lambda^i}}}\bigr].
\]
\end{thm}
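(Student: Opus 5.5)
The plan is to apply Lemma \ref{l:gen} directly to $X=\mathcal{M}_i$ with $n=3$.

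First, the Picard number. By Theorem \ref{thm2.4} applied to $\varphi_i:\mathcal{M}_i=\mathbb{P}(\mathrm{Sym}^d\mathcal{S}^\vee)\to\mathcal{X}_i$, and since $\rho(\mathcal{X}_i)=2$ by Theorem \ref{thm2.3}, we get $\rho(\mathcal{M}_i)=3$. Put
\[
D_1=(\pi_1^i\circ\varphi_i)^*\sigma_1^{(1,i)},\quad D_2=(\pi_2^i\circ\varphi_i)^*\sigma_1^{(2,i)},\quad D_3=\bigl[D_{Y_{\widetilde{\lambda^i}}}\bigr].
\]
The class $D_3$ is a genuine divisor class by Lemma \ref{lem3.7}.

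Next, the dual curves, which are already available from the earlier lemmas. Set $\gamma_1'=\gamma'_i$ from Lemma \ref{lem:gam1}, $\gamma_2'=\gamma''_i$ from Lemma \ref{lem:gam2}, and $\gamma_3'=\gamma_i$ from Lemma \ref{l:gam}. Each is an irreducible curve: $\gamma'_i$ and $\gamma''_i$ are images of $\mathbb{P}^1$ under non-constant morphisms, and $\gamma_i$ is a pencil, i.e. a line in a fiber of $\varphi_i$. Collecting the intersection numbers from the three lemmas gives $D_j\cdot\gamma_k'=\delta_{jk}$ for $1\le j,k\le 3$.

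Two of these entries need a brief comment. First, the vanishing of $D_1,D_2$ on $\gamma_i$ holds because $\gamma_i$ lies in a single fiber of $\varphi_i$, so $(\varphi_i)_*\gamma_i=0$ by the projection formula. Second, in Lemmas \ref{lem:gam1} and \ref{lem:gam2} the statement writes $\sigma_2^{(2,i)}$ where $\sigma_1^{(2,i)}$ is meant.

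The intersection matrix is the identity, so it is nonsingular. Hence $D_1,D_2,D_3$ are linearly independent in $NS(\mathcal{M}_i)_{\mathbb{Q}}$, and since the rank is $3$ they form a basis, at least rationally. To get integral generation I would argue as follows: $NS(\mathcal{M}_i)$ is generated by $\varphi_i^*NS(\mathcal{X}_i)$ together with the relative $\mathcal{O}(1)$. Then $D_3=a\,\mathcal{O}(1)+\varphi_i^*(\cdot)$, and $a=D_3\cdot\gamma_i=1$ because $\gamma_i$ is a line in a fiber. Therefore $D_1,D_2,D_3$ form a $\mathbb{Z}$-basis. Lemma \ref{l:gen} then yields $Nef(\mathcal{M}_i)=\langle D_1,D_2,D_3\rangle$.

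Since Lemma \ref{l:gen} only asks for a basis and dual irreducible curves, the substance is in the earlier lemmas, and the main point I would double-check is the basis claim. In particular, the identity $D_3\cdot\gamma_i=1$ should be read as "the fiber degree of $D_{Y_{\widetilde{\lambda^i}}}$ is $1$". This amounts to checking that, on a fiber $\mathbb{P}(\mathrm{Sym}^d\mathfrak{S}^{i\vee})$, the divisor is the hyperplane of hypersurfaces through the single point $W$. That check also confirms nefness of $D_3$ directly, since it is $\mathcal{O}(1)$ twisted by pullbacks.
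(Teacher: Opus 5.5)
Your route is the paper's route: the three classes form a basis of $NS(\mathcal{M}_i)$, the curves of Lemmas \ref{l:gam}, \ref{lem:gam1} and \ref{lem:gam2} give the identity intersection matrix, and Lemma \ref{l:gen} finishes. Your refinements are correct and tighten the paper's argument:
the projection formula handles $\gamma_i$, since it lies in a fiber; the fiber-degree argument gives a $\mathbb{Z}$-basis rather than mere independence; and $\sigma_2^{(2,i)}$ is indeed a typo for $\sigma_1^{(2,i)}$.

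The genuine gap, which the paper shares, is nefness. Dual curves only give one inclusion: if $D=\sum_j a_jD_j$ is nef, then $a_k=D\cdot\gamma'_k\ge 0$, so $\mathrm{Nef}(\mathcal{M}_i)\subseteq\langle D_1,D_2,D_3\rangle$. The reverse inclusion needs every $D_j$ to be nef. Lemma \ref{l:gen} as printed omits this hypothesis, and you explicitly read it as asking only for a basis and dual curves. Without that hypothesis the lemma is false. On $\mathbb{F}_1=\mathrm{Bl}_p\mathbb{P}^2\to\mathbb{P}^1$, the classes $H-E$ and $E$ form a $\mathbb{Z}$-basis of $NS$, and a general line and a fiber are irreducible curves dual to them, yet $E\cdot E=-1$. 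The same example disposes of your closing remark. $E$ has fiber degree $1$, so it is ``$\mathcal{O}(1)$ twisted by a pullback'', but it is not nef: fiber degree gives no control over horizontal curves.

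The repair is short.
\begin{itemize}
\item $D_1$ and $D_2$ are nef, being pullbacks of ample Schubert divisor classes under the morphisms $\pi^i_j\circ\varphi_i$.
\item For $D_3$, use a moving argument. Let $C\subset\mathcal{M}_i$ be an irreducible curve and pick $X\in C$. Since $SO(N)$ acts transitively on $OG(q;V)$ and $\dim X=r-1<r=\mathrm{codim}\,Y_{\widetilde{\lambda^i}}$, Kleiman's transversality theorem gives $g\in SO(N)$ with $X\cap gY_{\widetilde{\lambda^i}}=\emptyset$. Hence $C\not\subset D_{gY_{\widetilde{\lambda^i}}}=g\cdot D_{Y_{\widetilde{\lambda^i}}}$. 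Since $SO(N)$ is connected, this translate is algebraically equivalent to $D_{Y_{\widetilde{\lambda^i}}}$, so $\bigl[D_{Y_{\widetilde{\lambda^i}}}\bigr]\cdot C=\bigl[D_{gY_{\widetilde{\lambda^i}}}\bigr]\cdot C\ge 0$.
\end{itemize}

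Separately, your input $\rho(\mathcal{X}_i)=2$, also taken from the paper, fails in type $D$ when $N=2n$ and $q+r=n-1$. The standing assumptions allow this case, for example $N=16$, $q=4$, $r=3$. There $OG(n-1;V)=SO(2n)/P_{\{n-1,n\}}$ has Picard number $2$. So $\rho(\mathcal{X}_1)=3$ and $\rho(\mathcal{M}_1)=4$, and no three classes can span $\mathrm{Nef}(\mathcal{M}_1)$.
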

\begin{proof}
We know from Lemma \ref{l:gam} that $NS(\mathcal{M}_i)$ is generated by $(\pi_1^i\circ \varphi_i)^*\sigma_1^{(1,i)}$, $(\pi_2^i\circ \varphi_i)^*\sigma_1^{(2,i)}$ and $\bigl[D_{Y_{\widetilde{\lambda^i}}}\bigr]$ for $i=1,2$. Also, by Lemmas \ref{l:gam}, \ref{lem:gam1}, and \ref{lem:gam2}, we have 
\begin{align*}
(\pi_1^i \circ \varphi_i)^*\bigl(\sigma_1^{(1,i)}\bigr)\cdot \gamma_i &= 0,&(\pi_2^i \circ \varphi_i)^*\bigl(\sigma_2^{(2,i)}\bigr)\cdot \gamma_i &= 0,&\bigl[D_{Y_{\widetilde{\lambda^i}}}\bigr]\cdot \gamma_i &= 1,\\
(\pi_1^i \circ \varphi_i)^*\bigl(\sigma_1^{(1,i)}\bigr)\cdot \gamma'_i &= 1,&(\pi_2^i \circ \varphi_i)^*\bigl(\sigma_2^{(2,i)}\bigr)\cdot \gamma'_i &= 0,&\bigl[D_{Y_{\widetilde{\lambda^i}}}\bigr]\cdot \gamma'_i &= 0,
\\
(\pi_1^i \circ \varphi_i)^*\bigl(\sigma_1^{(1,i)}\bigr)\cdot \gamma''_i &= 0,&(\pi_2^i \circ \varphi_i)^*\bigl(\sigma_2^{(2,i)}\bigr)\cdot \gamma''_i &= 1,&\bigl[D_{Y_{\widetilde{\lambda^i}}}\bigr]\cdot \gamma''_i &= 0.
\end{align*}
Hence, an application of Lemma \ref{l:gen} completes the proof.
\end{proof}

Finally, we arrive at our main result below on the generators of the Nef cone of the Hilbert scheme $\mathrm{Hilb}_{P_{d,r}(t)}(OG(q;V))$. 

\begin{thm}\label{main}
The nef cone of the Hilbert scheme $\mathrm{Hilb}_{P_{d,r}(t)}(OG(q;V))$ is a cone generated b $6$ classes
\begin{align*}
(\pi_1^1\circ \varphi_1)^*\sigma_1^{(1,1)},\;\;(\pi_2^1\circ \varphi_1)^*\sigma_1^{(2,1)},\;\;\bigl[D_{Y_{\widetilde{\lambda^1}}}\bigr],\;\; (\pi_1^2\circ \varphi_2)^*\sigma_1^{(1,2)},\;\; (\pi_2^2\circ \varphi_2)^*\sigma_1^{(2,2)}\;\;\text{and}\;\;\bigl[D_{Y_{\widetilde{\lambda^2}}}\bigr].
\end{align*}
\end{thm}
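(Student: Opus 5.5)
The plan is to deduce Theorem \ref{main} from Theorem \ref{t:generators} together with the decomposition of Theorem \ref{thm3.5}. Under the standing assumption $2<r\leq \min\{\lfloor (N-1)/2\rfloor-q,q-1\}$, Theorem \ref{thm3.2} and Theorem \ref{thm3.5} identify $\mathrm{Hilb}_{P_{d,r}(t)}(OG(q;V))$ with the disjoint union $\mathcal{M}_1\sqcup\mathcal{M}_2$ of the two projective bundles over $\mathcal{X}_1$ and $\mathcal{X}_2$. The first step is to record that, for a disjoint union of projective varieties, everything splits componentwise:
\[
NS(\mathcal{M}_1\sqcup\mathcal{M}_2)\cong NS(\mathcal{M}_1)\oplus NS(\mathcal{M}_2),\qquad N_1(\mathcal{M}_1\sqcup\mathcal{M}_2)\cong N_1(\mathcal{M}_1)\oplus N_1(\mathcal{M}_2).
\]
This holds because a line bundle on the union is a pair of line bundles, and an irreducible curve lies entirely in one component. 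I will extend each class from $\mathcal{M}_i$ by zero (trivial bundle) on the other component, and use the same convention for the six listed classes.

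The second step is to show
\[
\mathrm{Nef}(\mathcal{M}_1\sqcup\mathcal{M}_2)=\mathrm{Nef}(\mathcal{M}_1)\times\mathrm{Nef}(\mathcal{M}_2).
\]
A class $(D_1,D_2)$ is nef exactly when $(D_1,D_2)\cdot C\geq 0$ for every irreducible curve $C$. Any such $C$ lies in some $\mathcal{M}_i$, and then $(D_1,D_2)\cdot C=D_i\cdot C$. So nefness of the pair is equivalent to $D_1$ nef on $\mathcal{M}_1$ and $D_2$ nef on $\mathcal{M}_2$.

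The third step inserts Theorem \ref{t:generators}: $\mathrm{Nef}(\mathcal{M}_i)$ is generated by $(\pi_1^i\circ\varphi_i)^*\sigma_1^{(1,i)}$, $(\pi_2^i\circ\varphi_i)^*\sigma_1^{(2,i)}$ and $[D_{Y_{\widetilde{\lambda^i}}}]$. The product of two cones, each generated by three elements, is generated by the six elements $(g,0)$ and $(0,g')$, which are exactly the six classes in the statement. Alternatively, I can apply Lemma \ref{l:gen} directly to the union. By Theorem \ref{thm2.4} and Theorem \ref{thm2.3} the Picard number is $3+3=6$. The six curves $\gamma_i,\gamma_i',\gamma_i''$ of Lemmas \ref{l:gam}, \ref{lem:gam1} and \ref{lem:gam2} have zero intersection with every class supported on the other component, so the full $6\times 6$ intersection matrix is the identity.

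The only delicate point is making the "extension by zero" and the Picard-number count on a disconnected scheme precise. Lemma \ref{l:gen} was stated for a projective variety, so the obstacle is to justify its use on a reducible, disconnected scheme. I would avoid this by using the product-of-cones argument of the second step, which never requires applying Lemma \ref{l:gen} beyond the connected components.
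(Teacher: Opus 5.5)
Your proposal is correct and follows the paper's proof, which simply combines Theorem~\ref{t:generators} with the decomposition $\mathrm{Hilb}_{P_{d,r}(t)}(OG(q;V))\cong\mathcal{M}_1\sqcup\mathcal{M}_2$ of Theorem~\ref{thm3.5}. Your extension-by-zero convention and the identity $\mathrm{Nef}(\mathcal{M}_1\sqcup\mathcal{M}_2)=\mathrm{Nef}(\mathcal{M}_1)\times\mathrm{Nef}(\mathcal{M}_2)$ just make explicit the bookkeeping the paper leaves implicit, and, as you note, this avoids applying Lemma~\ref{l:gen} to a disconnected scheme.
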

\begin{proof}
It follows by Theorem \ref{t:generators} and Theorem \ref{thm3.5}.
\end{proof}

\begin{ack}
We are grateful to Ignacio Sols for raising the question of the connectedness of the Hilbert scheme of the ordinary Grassmannians at the conference Algebraic Geometry and its Broader Implications, thereby bringing the problem to our attention through Seong’s work. We are indebted to Seong for his inspiring work on this question and for his computations of the nef cone, which have motivated our further investigation of analogous questions for the other types of Grassmannians.
The author started this work at the Institute for Basic Science and wishes to thank the members of the Center for Complex Geometry for their valuable support. 
\end{ack}
\bibliographystyle{alpha}
\bibliography{biblio}

\begin{bibdiv}
\begin{biblist}

\bib{AF}{article}{
      author={Anderson, David},
      author={Fulton, William},
       title={Chern class formulas for classical-type degeneracy loci},
        date={2018},
     journal={Compos. Math.},
      volume={154},
      number={8},
       pages={1746\ndash 1774},
}

\bib{BC13}{incollection}{
      author={Bertram, Aaron},
      author={Coskun, Izzet},
       title={The birational geometry of the {H}ilbert scheme of points on
  surfaces},
        date={2013},
   booktitle={Birational geometry, rational curves, and arithmetic},
      series={Simons Symp.},
   publisher={Springer, Cham},
       pages={15\ndash 55},
}

\bib{BL}{book}{
      author={Billey, Sara},
      author={Lakshmibai, V.},
       title={Singular loci of {S}chubert varieties},
      series={Progress in Mathematics},
   publisher={Birkh\"auser Boston, Inc., Boston, MA},
        date={2000},
      volume={182},
}

\bib{BP14}{article}{
      author={Biswas, Indranil},
      author={Parameswaran, A.~J.},
       title={Nef cone of flag bundles over a curve},
        date={2014},
     journal={Kyoto J. Math.},
      volume={54},
      number={2},
       pages={353\ndash 366},
}

\bib{BHLRSWZ}{article}{
      author={Bolognese, Barbara},
      author={Huizenga, Jack},
      author={Lin, Yinbang},
      author={Riedl, Eric},
      author={Schmidt, Benjamin},
      author={Woolf, Matthew},
      author={Zhao, Xiaolei},
       title={Nef cones of {H}ilbert schemes of points on surfaces},
        date={2016},
     journal={Algebra Number Theory},
      volume={10},
      number={4},
       pages={907\ndash 930},
}

\bib{BKT2009}{article}{
      author={Buch, Anders~Skovsted},
      author={Kresch, Andrew},
      author={Tamvakis, Harry},
       title={Quantum {P}ieri rules for isotropic {G}rassmannians},
        date={2009},
     journal={Invent. Math.},
      volume={178},
      number={2},
       pages={345\ndash 405},
}

\bib{LRY}{article}{
      author={Changzheng~Li, Mingzhi~Yang, Konstanze~Rietsch},
       title={An anticanonical perspective on g/p schubert varieties},
        date={2025},
     journal={preprint, arxiv:2506.18388},
}

\bib{EH16}{book}{
      author={Eisenbud, David},
      author={Harris, Joe},
       title={3264 and all that---a second course in algebraic geometry},
   publisher={Cambridge University Press, Cambridge},
        date={2016},
}

\bib{EM}{article}{
      author={El~Maazouz, Yassine},
      author={Mandelshtam, Yelena},
       title={The positive orthogonal {G}rassmannian},
        date={2025},
     journal={Matematiche (Catania)},
      volume={80},
      number={1},
       pages={279\ndash 302},
}

\bib{Ful}{book}{
      author={Fulton, William},
       title={Intersection theory},
     edition={Second},
      series={Ergebnisse der Mathematik und ihrer Grenzgebiete. 3. Folge. A
  Series of Modern Surveys in Mathematics [Results in Mathematics and Related
  Areas. 3rd Series. A Series of Modern Surveys in Mathematics]},
   publisher={Springer-Verlag, Berlin},
        date={1998},
      volume={2},
}

\bib{FP}{book}{
      author={Fulton, William},
      author={Pragacz, Piotr},
       title={Schubert varieties and degeneracy loci},
      series={Lecture Notes in Mathematics},
   publisher={Springer-Verlag, Berlin},
        date={1998},
      volume={1689},
        note={Appendix J by the authors in collaboration with I.
  Ciocan-Fontanine},
}

\bib{Gross}{book}{
      author={Grosshans, Frank~D.},
       title={Algebraic homogeneous spaces and invariant theory},
      series={Lecture Notes in Mathematics},
   publisher={Springer-Verlag, Berlin},
        date={1997},
      volume={1673},
}

\bib{Hart}{article}{
      author={Hartshorne, Robin},
       title={Connectedness of the {H}ilbert scheme},
        date={1966},
     journal={Inst. Hautes \'Etudes Sci. Publ. Math.},
      number={29},
       pages={5\ndash 48},
}

\bib{HM}{article}{
      author={Hong, Jaehyun},
      author={Mok, Ngaiming},
       title={Characterization of smooth {S}chubert varieties in rational
  homogeneous manifolds of {P}icard number 1},
        date={2013},
     journal={J. Algebraic Geom.},
      volume={22},
      number={2},
       pages={333\ndash 362},
}

\bib{Humphreys}{book}{
      author={Humphreys, James~E.},
       title={Introduction to {L}ie algebras and representation theory},
      series={Graduate Texts in Mathematics},
   publisher={Springer-Verlag, New York-Berlin},
        date={1978},
      volume={9},
        note={Second printing, revised},
}

\bib{KPZ}{incollection}{
      author={Kishimoto, Takashi},
      author={Prokhorov, Yuri},
      author={Zaidenberg, Mikhail},
       title={Group actions on affine cones},
        date={2011},
   booktitle={Affine algebraic geometry},
      series={CRM Proc. Lecture Notes},
      volume={54},
   publisher={Amer. Math. Soc., Providence, RI},
       pages={123\ndash 163},
}

\bib{Miyaoka}{incollection}{
      author={Miyaoka, Yoichi},
       title={The {C}hern classes and {K}odaira dimension of a minimal
  variety},
        date={1987},
   booktitle={Algebraic geometry, {S}endai, 1985},
      series={Adv. Stud. Pure Math.},
      volume={10},
   publisher={North-Holland, Amsterdam},
       pages={449\ndash 476},
}

\bib{OV90}{book}{
      author={Onishchik, A.~L.},
      author={Vinberg, \`E.\~B.},
       title={Lie groups and algebraic groups},
      series={Springer Series in Soviet Mathematics},
   publisher={Springer-Verlag, Berlin},
        date={1990},
        note={Translated from the Russian and with a preface by D. A. Leites},
}

\bib{Seong20}{article}{
      author={Seong, See-Hak},
       title={The {H}ilbert scheme of the {G}rassmannian is not connected},
        date={2020},
     journal={Comm. Algebra},
      volume={48},
      number={8},
       pages={3439\ndash 3446},
}

\bib{Seong}{article}{
      author={Seong, See-Hak},
       title={The nef cone of the hilbert scheme of hypersurfaces in the
  grassmannian},
        date={2020},
     journal={preprint, arxiv:2005.08266},
}

\end{biblist}
\end{bibdiv}
\end{document}